\numberwithin{figure}{section}
\theoremstyle{plain}
\newtheorem{thm}{Theorem}[section]
\newtheorem{lem}[thm]{Lemma}
\newtheorem{cor}{Corollary}[thm]
\theoremstyle{definition}
\newtheorem{defn}{Definition}[section]
\newtheorem{exmp}{Example}[section]
\theoremstyle{remark}
\title{\textbf{On convergence and rational summation of power series in p-adic field }}
\author[A. A. Shaikh]{Absos Ali Shaikh$^{1}$}
\address{Department of Mathematics,\\ The University of Burdwan,\\ \newline Burdwan-713101, West Bengal, India.}
\email{$^1$aask2003@yahoo.co.in, aashaikh@math.buruniv.ac.in}
\author[M. A. Sarkar]{MABUD ALI SARKAR$^2$}
\address{Department of Mathematics\\ The University of Burdwan \\ Burdwan-713101, India.}
\email{$^2$mabudji@gmail.com}
\begin{document}

\begin{abstract}
	In this paper we have discussed convergence of power series both in p-adic norm as well as real norm. We have investigated rational summability of power series with respect to both p-adic norm and real norm under certain conditions. Then we have studied convergence of specially constructed power series and derived summation formula. Finally, we have studied the adele, idele and some results regarding it with the help of convergent power series.
\end{abstract} 
\footnotetext{
	$\mathbf{2010}$\hspace{10pt}Mathematics\; Subject\; Classification: 12J25, 32P05, 26E30, 40A05, 40D99, 11S99.\\ 
	{Key words and phrases: Non-Archimedean valued field, real field, p-adic field, power series, Adele. }}
\maketitle
\section{Introduction} 
For an ordered field $ F $, the  Archimedian property says that for any positive $a $ and $b$ belonging to $F$ there exists a natural number $ n$ such that $ na>b$ i.e., $\underbrace{a+ \cdots+ a}_{n \ \text{times}}>b $. If the Archimedian property does not hold, then the field is called non-Archimedean valued field. In this paper we have considered a non-ordered non-Archimedian valued field, particularly, the p-adic field denoted by $ \mathbb{Q}_p $, which is a non-ordered and non-Archimedian valued field. \\ Our main interests are-
\begin{enumerate}
	\item To investigate power series having rational sum both in p-adic norm as well as real norm, \item What are the rational numbers at which the power series converges with respect to both usual absolute value as well as p-adic absolute value,  \item To study adeles and ideles with the help of rational summable power series in p-adic norm as well as real norm. \end{enumerate} 
To this end we have  included two Lemmas (4.1) and (4.2) to prove the Theorem (4.3) in which we have show equal radius of convergence of a power series with respect to both p-adic absolute value and usual absolute value and then characterised the rationals within the common radius of convergence. Then we have investigated a power series with rational sum both in p-adic norm as well as real norm. Next we have proved two Lemmas (4.5) and (4.6) and using these Lemmas we have proved Theorem (4.7) for a power series having rational sum under certain conditions with respect to both p-adic norm as well as real norm. Next we have discussed convergence of a special power series in Theorem (4.8) and using this power series we have derived a summation formula in Theorem (4.9), giving sum in p-adic norm. Finally we have studied Adelic aspect i.e., Adele and Idele with the help of p-adic convergent power series in the Theorems (4.11), (4.12) and (4.13) using (4.10). \\ \\
The paper has been oriented in the following way: Section $2$ deals with some basic facts of the construction of p-adic field $ \mathbb{Q}_p$. Section $3$ is devoted to the study of p-adic valuation and region of convergence in p-adic fields. Section $4$ is concerned with results and their proofs. Finally conclusion is given.
\section{Some basic facts about p-adic numbers and p-adic fields}
	This section deals with the construction of p-adic field $ \mathbb{Q}_p$ just like obtaining real field $ \mathbb{R} $ as the completion of rational field $ \mathbb{Q}$. For this purpose, the concept of absolute value on $ \mathbb{Q} $ is defined as follows:  
\begin{defn} (\cite{kj})
	The usual absolute value on $ \mathbb{Q} $ is a mapping $ |.| : \mathbb{Q} \to \mathbb{R}$ given by 
	 $$|x|= \left\{
	 \begin{array}{ll}
	 	x  \ \ : x \geq 0, & \hbox{} \\ -x : x <0
	 	 & \hbox{}
	 \end{array}
	 \right. .$$
\end{defn} 
satisfying \\  (i) $ |x|=0 \ \text{if and only if } \  x=0 $ \\ (ii) $ |xy|=|x||y| $ \\ (iii) $ |x+y| \leq |x|+|y| \ \text{(Triangle Inequality)} .$ \\ The usual absolute value $ |.| $ define a metric $ d: \mathbb{Q} \times \mathbb{Q} \to \mathbb{R} $ on $\mathbb{Q}$  by $ d(x,y)=|x-y|$ for all $x,y \in \mathbb{Q}$ and with respect to this metric $ \mathbb{Q}$ is a metric space. A Cauchy sequence in a metric space is a sequence where distances between two consecutive terms decreases. A metric space is complete if every Cauchy sequence in this space converges in it. The field $ \mathbb{Q}$ is not complete with respect to the usual absolute value i.e., $  (\mathbb{Q},|.|)  $ is not a complete metric space. For, the well-known sequence $ \{1,1.4,1.41,1.414, \cdots \} $ which is a Cauchy sequence in $ \mathbb{Q}  $ converging to $  \sqrt{2}  $  but $  \sqrt{2} \notin \mathbb{Q}$. Now we complete $ \mathbb{Q}  $ with respect to the usual absolute value $  |.|  $ which yields $ \mathbb{R} $. This is obtained by including all possible limits of every Cauchy sequences in $ \mathbb{Q}$  with the space $(\mathbb{Q},|.|)$. This is the way we get $ \mathbb{R}  $ from $ \mathbb{Q}  $. Since completion of any field is again a field, the completion $ \mathbb{R}$ is also a field. \\ Now let us consider a different absolute value on $\mathbb{Q}$ other than the usual absolute value. Then what do we get? The answer is given by the following:
\begin{defn}\cite{kj}
	For every prime $ p $ we get an p-adic absolute value on $ \mathbb{Q}$ associated with the corresponding $p$. Fix a prime $p$ and choose a non-zero  number $x \in \mathbb{Q} $. Then $ x$ can be represented as $$ x=p^n \frac{a}{b},$$  for $b \neq 0$ and $ p$ does not divide $ a,b$. Then the p-adic absolute value on $\mathbb{Q}$ is defined by $$ |x|_p=|p^n \frac{a}{b}|_p=p^{-n}. $$ For $0 \in \mathbb{Q}$, we define $ |0|_p =0$. We can easily notice that $ |.|_p $ maps into the discrete set $ \ \{p^n: n \ \text{is integer} \} \cup \{0\}. $
\end{defn} 
\begin{exmp}
	Some examples of p-adic absolute values on $ \mathbb{Q} $ are given below: 
	$$ |25|_5=|5^2|_5=5^{-2}=\frac{1}{25}, $$
	$$ |100|_2=|2^2 \times 5^2|_2=2^{-2}=\frac{1}{4}, $$
	$$ |\frac{196}{5}|_7=|7^2 \cdot \frac{4}{5}|_7=7^{-2}=\frac{1}{49}, \ etc. $$ 

\end{exmp}
From the p-adic absolute value we can define(\cite{ii}) a metric $d: \mathbb{Q} \times \mathbb{Q} \to \mathbb{Q}_{+}$, where $ \mathbb{Q}_+ $ denotes the positive rationals, on $ \mathbb{Q} $ by $$ d(x,y)=|x-y|_p.$$ 
The p-adic metric measures distance in an unexpected way. For example, 
$$ |6879-4|_5=|6875|_5=|5^4 \times 11|_5=5^{-4}=\frac{1}{625},$$
$$ |5-4|_5=|1|_5=|7^{0} \times 1|_5=5^0=1.$$
Thus, we have 
$ |6879-4|_5<|5-4|_5$, which implies that p-adic distance between two distant points becomes less than that between two nearer points. The important fact is that $ (\mathbb{Q}, |.|_p) $ is not complete. But just like the process of completion of $ \mathbb{Q} $ with respect to the usual absolute value $  |.| $, we make completion of $ \mathbb{Q}$ with respect to $|.|_p$. 
For every prime number $p$, we get a completion of $ \mathbb{Q}$ with respect to the associated p-adic metric. This completion is named as $  \mathbb{Q}_p $. Since $ \mathbb{Q} $ is a field, its completion $ \mathbb{Q}_p $ is also a field, called the field of p-adic numbers or p-adic field. The p-adic absolute value $  |.|_p $  satisfies the following: \\
(i) $ \ |x|_p=0 \ \text{if and only if} \ x=0 , $ \\ 
(ii) $ \ |xy|_p=|x|_p |y|_p , $ \\
(iii) $ \ |x+y|_p \leq \max \{|x|_p , |y|_p \} . $ \\ 
We note that the condition $ (iii) $ also satisfies the triangle inequality as 
$$ |x+y|_p \leq \max \{|x|_p, |y|_p\} \leq |x|_p+|y|_p . $$
Actually the condition $(iii)$ is much stronger than that of triangle inequality and this is called strong triangle inequality or ultrametric triangle inequality.  
\\ 
The p-adic absolute value $  |.|_p $ is also called non-Archimedian absolute value because for any any integer $ n $, 
\begin{align*} 
	|n|_p=\underbrace{|1+1+\cdots +1|_p}_{n \ times} & \leq  \max \{|1|_p,|1|_p, \cdots |1|_p \} =|1|_p=1, \\ 
	i.e., \ \  & |n|_p \leq 1.
\end{align*} 
Hence the field $ \mathbb{Q}_p $ is a non-Archimedian valued field. We know that all non-zero squares of an ordered field is positive. But the number $\sqrt{-7}$ exist in $ \mathbb{Q}_2$ and its square is $-7$, which is not positive. Hence $ \mathbb{Q}_2$ is non-ordered field and in fact $\mathbb{Q}_p$ is non-ordered field for every prime $ p$. Thus $\mathbb{Q}_p$ is a non-ordered and non-Archimedean field.
\section{p-adic valuation and region of convergence in p-adic fields}
First we define p-adic valuation $ \text{ord}_p $ by $$ \text{ord}_p(x)=\max \{r : p^r |x \} , \ x \in \mathbb{Q}. $$ 
For non-zero $ x \in \mathbb{Q} $, we have $$ x=p^k \frac{a}{b} , \ b \neq 0 , p \ \text{does not divide  } a,b.$$
In view of the above definition of p-adic valuation of $ x $, we have 
$$ \text{ord}_p(x)=k. $$
We can simply relate this fact with p-adic absolute value  by 
$$ |x|_p=p^{- \text{ord}_p(x)}.$$
\begin{exmp} 
Some examples of p-adic valuation are $$ \text{ord}_5(25)=2, \ \ \text{ord}_3(2)=0 , \ \ \text{ord}_7 \left(\frac{50}{49} \right)=-2. $$
The p-adic valuation satisfies (\cite{ii} \cite{jgh}) 
\\ 
(i) $ \text{ord}_p(x) =\infty \Leftrightarrow x=0 $, \\ 
(ii) $ \text{ord}_p(xy)=\text{ord}_p(x)+\text{ord}_p(y) \  $ , \\
(iii) $ \text{ord}_p \left(\frac{x}{y}\right)=\text{ord}_p(x)-\text{ord}_p(y) \ $, \\
(iv) $ \text{ord}_p(x+y) \geq \min \{ \text{ord}_p(x), \text{ord}_p(y)\} $.
\end{exmp}
Note that convergence of power series is a topological issue. Just like real or complex analysis, the Hadamard's formula ( \cite{hj} \cite{j} \cite{jhg}  ) for radius of convergence of a power series holds good in non-Archimedian geometry and is stated as follows: \\ Let $ f(x)=\sum_{n=1}^{\infty} a_nx^n  \ \text{where} \ a_n \in \mathbb{Q}_p $. Define $ R \in [0, \infty] $ by the formula $$ \frac{1}{R}=\varlimsup_{n \to \infty} \sqrt[n]{|a_n|_p}, $$
where $ \frac{1}{0}=\infty \  \text{and} \ \frac{1}{\infty}=0 $. For $ x \in \mathbb{Q}_p $, $  f(x) $ converges if $  |x|_p <R  $ and $  f(x) $ diverges if $ \ |x|_p>R \ $. 
\\
Now the natural question arises what can we say about the convergence of $ f(x) $ at $ x \in \mathbb{Q}_p $ when $  |x|_p=R \ , R>0$ ? The answer is given below: \\
Since $ \ |.|_p $ is non-Archimedian, the series $ f(x)=  \sum_{n=1}^{\infty} a_n x^n $ in $ \ \mathbb{Q}_p $ converges if and only if $ \ |a_nx^n|_p=|a_n|_p R^n \to 0 \ \text{as} \ n \to \infty $ and this depends on $  x  $. Thus $ f(x) $ may converge at $  x \in \mathbb{Q}_p $ with $ |x|_p=R $ or $ f(x) $ may not converge at $  x \in \mathbb{Q}_p $ with $  |x|_p=R  $. Hence the region of convergence of $ f(x)=\sum_{n=1}^{\infty} a_n x^n \ $ is  given by $ \{x \in \mathbb{Q}_p : |x|_p<R \}  \ \text{or} \ \{x \in \mathbb{Q}_p: |x|_p \leq R \}. $ \\ Now the main questions are as follows: \begin{enumerate}
	\item  Does there exist a power series having rational sum in p-adic norm as well real norm ? \item Is it possible to derive a summation formula resulting rational sum of a power series in p-adic norm as well as real norm? \item What are the rationals at which the power series converges both in p-adic norm as well real norm within the equal radius of convergence ? \item Does there exist any relation among p-adic convergent power series, Adle and Idele?
\end{enumerate}
Well, we have attacked and answered each of these questions in the following section of $\text{Main Results}$.

\section{Main Results}

We consider the generalised binomial expansion $$B(b,x)=(1+x)^b=\sum_{n=0}^{\infty} \binom{b}{n}x^n=\sum_{n=0}^{\infty} \frac{b(b-1) \cdots (b-n+1)}{n!}x^n,$$ where $b$  is either real or complex number. The series converges or diverges according as $|x|<1$ or $|x|>1$ respectively, with respect to the usual absolute value. \\ Next, we note that $\mathbb{Z}_p$ is the ring of p-adic integers, which is a completion of $\mathbb{Z}$ with respect to p-adic absolute value. The algebraic closure of $ \mathbb{Q}_p$ is denoted by $ \bar{\mathbb{Q}}_p$, the completion of $ \bar{\mathbb{Q}}_p$ is denoted by $\Omega_p$, which is a nice field in p-adic analysis and $\Omega_p[[X]]$ is denoted as the ring of formal power series over $\Omega_p$.  \\ Let us define the open disc with radius $r$ centered at $0$ by  $ D(r^{-})=\{x \in \Omega_p: |x|<r \}$. 
For $b \in \Omega_p$, define the follwing function: \\
$$ B(b,p,X)=\sum_{n=0}^{\infty} \frac{b(b-1) \cdots (b-n+1)}{n!}X^n \in \Omega_p[[X]],$$
which is similar of generalised binomial function. This may be termed as formal power series because we still do not know about its convergence and only know about its coefficients. Now we check the convergence of the power series above. We will consider two cases $|b|_p >1$ and $|b| \leq 1$ differently. If $|b|_p>1$, then $|b-j|_p=|b|_p, \ \forall j=0,1,2,3, \cdots$ and so the $|.|_p$ value of the $n^{th}$ term of the power series will be 
\begin{align*}
\left| \frac{b(b-1) \cdots (b-n+1)}{n!}X^n \right|_p =\frac{|b|_p |b-1|_p \cdots |b-n+1|_p}{|n!|_p}|X^n|_p &=\frac{|b|_p|b|_p \cdots |b|_p \ (n \ times)}{|n!|_p}|x^n|_p \\ &=\frac{|bx|_p^n}{|n!|_p}=c_n, \ say. \end{align*} If $r$ is the radius of convergence, then by Cauchy-Hadamard's formula, we have \\ \begin{align*}
 r= \frac{1}{\lim_{n \to \infty} \sqrt[n]{|c_n|_p}}=\frac{1}{\lim_{n \to \infty} \sqrt[n]{\left|\frac{b^n}{n!}\right|_p}} &=\frac{1}{\lim_{n \to \infty} |b|_p \sqrt[n]{|n!|_p}} =\frac{p^{-\frac{1}{p-1}}}{|b|_p}.  \end{align*} 
Thus the region of convergence of the power series $B(b,p,X)$ is $D(r^{-})=D \left( (\frac{p^{-\frac{1}{p-1}}}{|b|_p})^{-} \right)$, which depends on the number $b$. \\ 
If $ |b|_p \leq 1$ i.e., $ b \in \mathbb{Z}_p$, then we have $ |b-j|_p \leq 1, \ \forall j=0,1,2,3, \cdots$, and therefore $$ \left|\frac{b(b-1) \cdots (b-n+1)X^n}{n!} \right|_p \leq \left|\frac{X^n}{n!} \right|_p. $$
Then proceeding as above the radius of convergence of the right-hand series $ \sum_{n=0}^{\infty} \frac{X^n}{n!}$ is equal to $p^{-\frac{1}{p-1}}$ with respect to the p-adic absolute value. Hence the power series $B(b,p,X)$ converges at least on $D \left((p^{-\frac{1}{p-1}})^{-} \right)$ but still we have to investigate more exact result on the region of convergence of $B(b,p,X)$ where $b \in \mathbb{Z}_p$. \\ At this end, we considered the following Lemma which will be used in Theorem (4.3).
\begin{lem}
	\cite{abc} Each $f(X) \in \mathbb{Z}_p[[X]]$ converges in $D(1^{-})$, where $ .\mathbb{Z}_p[[X]]$ is a ring of formal power series.
\end{lem}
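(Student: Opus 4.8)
The plan is to argue directly from the non-Archimedean nature of the $p$-adic norm together with the completeness of $\Omega_p$. Write $f(X)=\sum_{n=0}^{\infty}a_nX^n$ with each $a_n\in\mathbb{Z}_p$, so that $|a_n|_p\le 1$ for all $n$ by the very definition of the ring of $p$-adic integers. Fix $x\in D(1^{-})$, i.e. $|x|_p<1$. First I would estimate the general term: $|a_nx^n|_p=|a_n|_p\,|x|_p^{\,n}\le|x|_p^{\,n}$, and since $0\le|x|_p<1$ the right-hand side tends to $0$ as $n\to\infty$; hence $|a_nx^n|_p\to 0$.

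The second step invokes the standard criterion for complete non-Archimedean fields that was already recalled in Section $3$ (in the discussion following Hadamard's formula): a series $\sum_n c_n$ with $c_n\in\Omega_p$ converges if and only if $|c_n|_p\to 0$. This rests on the ultrametric inequality $\big|\sum_{n=N}^{M}c_n\big|_p\le\max_{N\le n\le M}|c_n|_p$, which forces the partial sums to be Cauchy once $|c_n|_p\to 0$, and then completeness of $\Omega_p$ supplies the limit. Applying it with $c_n=a_nx^n$ yields convergence of $f(x)$ for every $x$ with $|x|_p<1$, that is, on all of $D(1^{-})$.

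An equivalent packaging of the same computation is via Cauchy--Hadamard: feeding the bound $|a_n|_p\le 1$ into $\frac{1}{R}=\varlimsup_{n\to\infty}\sqrt[n]{|a_n|_p}\le 1$ shows the radius of convergence satisfies $R\ge 1$, so the region of convergence contains $\{x\in\Omega_p:|x|_p<1\}=D(1^{-})$. Either route gives the claim.

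There is no genuine obstacle here; the only point deserving a word of care is that the ambient field must be complete for the ``general term $\to 0$'' criterion to produce an actual limit — this is precisely why the statement is phrased over $\Omega_p$, the completion of $\bar{\mathbb{Q}}_p$, rather than over $\bar{\mathbb{Q}}_p$ itself, and the identical argument works verbatim over $\mathbb{Q}_p$.
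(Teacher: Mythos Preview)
Your argument is correct and entirely standard: the bound $|a_n|_p\le 1$ for coefficients in $\mathbb{Z}_p$ together with $|x|_p<1$ gives $|a_nx^n|_p\to 0$, and the non-Archimedean convergence criterion (or equivalently Cauchy--Hadamard) then yields convergence on $D(1^{-})$. The paper itself does not supply a proof of this lemma at all---it is stated with a citation to Koblitz \cite{abc}---so there is nothing to compare against; your write-up is exactly the proof one would expect and is more than adequate.
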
 
We claim that $B(b,p,X) \in \mathbb{Z}_p[[X]].$ For this we need to show that the coefficient $ \frac{b(b-1) \cdots (b-n+1)}{n!} \in \mathbb{Z}_p$. This is achieved from the following Lemma which will be used in Theorem (4.3) and (4.14). 
\begin{lem}
If $b \in \mathbb{Z}_p$, then $ \binom{b}{n}=\frac{b(b-1) \cdots (b-n+1)}{n!} \in \mathbb{Z}_p.$	
\end{lem}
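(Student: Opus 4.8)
The plan is to reduce the general case $b \in \mathbb{Z}_p$ to the classical case of non-negative rational integers and then pass to the limit using continuity and density. Fix $n$ and consider the map $f_n(b) = \binom{b}{n} = \frac{b(b-1) \cdots (b-n+1)}{n!}$, which is a polynomial in $b$ with coefficients in $\mathbb{Q} \subseteq \mathbb{Q}_p$.

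First I would dispose of the base case: if $b$ is a non-negative integer, then $\binom{b}{n}$ is an ordinary binomial coefficient, hence a non-negative integer, hence an element of $\mathbb{Z} \subseteq \mathbb{Z}_p$. (Negative integers can be reached via the identity $\binom{-m}{n} = (-1)^n \binom{m+n-1}{n}$, but for the argument that follows the non-negative integers already suffice.) Next I would invoke three standard facts about the $p$-adic topology: polynomial functions $\mathbb{Q}_p \to \mathbb{Q}_p$ are continuous with respect to $|.|_p$; the non-negative integers $\mathbb{Z}_{\geq 0}$ are dense in $\mathbb{Z}_p$, since truncating the canonical $p$-adic expansion of $b \in \mathbb{Z}_p$ at level $N$ yields $m \in \mathbb{Z}_{\geq 0}$ with $|b - m|_p \leq p^{-N}$; and $\mathbb{Z}_p = \{x \in \mathbb{Q}_p : |x|_p \leq 1\}$ is a closed subset of $\mathbb{Q}_p$.

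Combining these, for $b \in \mathbb{Z}_p$ I would choose $m_k \in \mathbb{Z}_{\geq 0}$ with $m_k \to b$ in $|.|_p$; continuity of $f_n$ gives $f_n(m_k) \to f_n(b) = \binom{b}{n}$, each $f_n(m_k)$ lies in $\mathbb{Z}_p$ by the base case, and the closedness of $\mathbb{Z}_p$ forces $\binom{b}{n} \in \mathbb{Z}_p$. The main (and only mild) obstacle is verifying the continuity of the polynomial $f_n$ and the density of $\mathbb{Z}_{\geq 0}$ in $\mathbb{Z}_p$, both of which follow directly from the ultrametric inequality and the structure of $p$-adic expansions. As an alternative that sidesteps the density argument, one can work directly with valuations: since for each $k \geq 1$ the number of $j \in \{0, 1, \dots, n-1\}$ with $j \equiv b \pmod{p^k}$ is at least $\lfloor n/p^k \rfloor$, one obtains $\text{ord}_p\big(b(b-1)\cdots(b-n+1)\big) = \sum_{k \geq 1} \#\{\, 0 \leq j \leq n-1 : p^k \mid b-j \,\} \geq \sum_{k \geq 1} \lfloor n/p^k \rfloor = \text{ord}_p(n!)$ by Legendre's formula, which is precisely the assertion $\binom{b}{n} \in \mathbb{Z}_p$.
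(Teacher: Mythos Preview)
Your primary argument is correct and is essentially the same as the paper's: both fix $n$, observe that $\binom{b}{n}$ is an integer for (non-negative) integer $b$, and then pass to arbitrary $b\in\mathbb{Z}_p$ by continuity of the polynomial $f_n$, density of the integers in $\mathbb{Z}_p$, and closedness of $\mathbb{Z}_p$; if anything, your write-up is a bit more careful (you restrict to $\mathbb{Z}_{\ge 0}$ so the combinatorial interpretation is valid, and you state continuity into $\mathbb{Q}_p$ rather than presupposing the codomain is $\mathbb{Z}_p$). The valuation-counting alternative via Legendre's formula that you append is not in the paper and gives an independent, purely arithmetic proof that avoids any topological input.
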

  \begin{proof} We give this proof in our way as follows.\\ Since $ \mathbb{Z} \subset \mathbb{Z}_p$, then either $b \in \mathbb{Z}$ or $ b \in \mathbb{Z}_p \setminus \mathbb{Z}$. \\ If \ $b \in \mathbb{Z}$, then construct the function $f(b)=\frac{b(b-1) \cdots (b-n+1)}{n!}=\binom{b}{n}$, which represents the number of distinct combinations of $n$ things chosen from a set of $b$ objects and indeed this is  a positive integer and hence a p-adic integer i.e., $ \frac{b(b-1) \cdots (b-n+1)}{n!} \in \mathbb{Z}_p.$
 \\ Next look at the fact that the function $f(b)=\frac{b(b-1) \cdots (b-n+1)}{n!}$ defined above is a continuous function from $\mathbb{Z}_p$ to $\mathbb{Z}_p$. Also the ring of p-adic integers $ \mathbb{Z}_p$ is closed with respect to the p-adic topology obtained from p-adic absolute value. Then $f(b_i)=\frac{b_i(b_i-1) \cdots (b_i-n+1)}{n!} \in \mathbb{Z}.$ Now for $ b \in \mathbb{Z}_p \setminus \mathbb{Z}$, construct a sequence  of integers $ \{b_i\}_{i \in \mathbb{Z}^{+}} $ in $ \mathbb{Z}$ with limit $ b \in \mathbb{Z}_p$. Since $ f$ is continous, we have $ f(b)=\lim_{i \to \infty} (f(b_i) )$. Since $ \mathbb{Z}_p$ is closed with respect to the p-adic topology, we conclude that the limit function $f(b)$ belongs to $ \mathbb{Z}_p$. So for all $b \in \mathbb{Z}_p$, the coefficient $ \frac{b(b-1) \cdots (b-n+1)}{n!} \in \mathbb{Z}_p$. \end{proof}
 To this end we have the following theorem which will be used in Theorem (4.7): 
\begin{thm} (i)
	Let $|.|$ $( \text{or} \ |.|_{\infty})$ and $|.|_p$ be respectively the usual absolute value and p-adic value on $ \mathbb{Q}$ and consider the power series $ \sum_{n=0}^{\infty} \frac{b(b-1) \cdots (b-n+1)}{n!}x^n$. Then the radius of convergence is equal to $1$ with respect to the usual (resp. p-adic) absolute value on $ \mathbb{Q}$ if $ b \in \mathbb{Q} \cap (\mathbb{Z}_p \setminus Z)$. \\ $(ii)$ The set of rationals $x \in \mathbb{Q}$ such that the above power series converges both in p-adic absolute value and usual absolute value within the same radius of convergence $ 1$ i.e., belonging to both the sets $ \{x \in \mathbb{Q}: |x|_{\infty}< 1 \}$ and $ \{x \in \mathbb{Q}: |x|_p<1 \}$ in which the above power serise converges is given by the set $$\{x \in \mathbb{Q}: x=\frac{u}{v}, \ u,v \in \mathbb{Z}, \ (u,v)=1, \ |u|_{\infty} \leq |v|_{\infty}, \ p|u, \ p \nmid v \}.$$
\end{thm}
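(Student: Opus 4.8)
For the usual absolute value in part (i), the plan is to apply the ratio test to $a_n=\binom{b}{n}=\frac{b(b-1)\cdots(b-n+1)}{n!}$. Since $b\in\mathbb{Q}\setminus\mathbb{Z}$, none of $b,b-1,b-2,\dots$ vanishes, so each $a_n\in\mathbb{Q}\setminus\{0\}$ and
$$\left|\frac{a_{n+1}}{a_n}\right|_{\infty}=\frac{|b-n|_{\infty}}{n+1}\longrightarrow 1\qquad(n\to\infty),$$
which gives radius of convergence $1$ in $|\cdot|_{\infty}$. (Had $b$ been a non-negative integer the series would terminate, which is why $b\notin\mathbb Z$ is imposed.) On the p-adic side one inequality is immediate from Lemma (4.2): $b\in\mathbb{Z}_p$ forces $\binom{b}{n}\in\mathbb{Z}_p$, hence $|a_n|_p\le1$, so Hadamard's formula yields $1/R_p=\varlimsup_n\sqrt[n]{|a_n|_p}\le1$, i.e. $R_p\ge1$; equivalently, $B(b,p,X)\in\mathbb{Z}_p[[X]]$ converges on $D(1^-)$ by Lemmas (4.1)--(4.2).

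The substantive point, and the main obstacle, is the reverse inequality $R_p\le1$, i.e. $\varliminf_n\operatorname{ord}_p\!\big(\binom{b}{n}\big)/n=0$. Since $\operatorname{ord}_p\binom{b}{n}$ cannot be bounded uniformly (it may be large already for small $n$), the plan is to exhibit a subsequence along which it vanishes. Expand $b=\sum_{i\ge0}d_ip^i$ in base $p$ with $0\le d_i\le p-1$; as $b\notin\mathbb{Z}$ it is not a non-negative integer, so infinitely many digits $d_m$ are nonzero. Fix such an $m$ and take $n=p^m$. Using $\operatorname{ord}_p\binom{b}{p^m}=\sum_{j=0}^{p^m-1}\operatorname{ord}_p(b-j)-\operatorname{ord}_p((p^m)!)$ (legitimate since every $b-j\ne0$), I would evaluate both terms by counting multiples: Legendre's formula gives $\operatorname{ord}_p((p^m)!)=\sum_{i=1}^{m}p^{m-i}=\tfrac{p^m-1}{p-1}$, while $\sum_{j=0}^{p^m-1}\operatorname{ord}_p(b-j)=\sum_{i\ge1}\#\{0\le j<p^m:\ j\equiv b\!\!\pmod{p^i}\}$ contributes $p^{m-i}$ for $1\le i\le m$ and $0$ for $i>m$, the latter because the least non-negative integer congruent to $b$ modulo $p^i$ equals $\sum_{l<i}d_lp^l\ge d_mp^m\ge p^m$ and hence lies outside $[0,p^m)$, as does every other member of that class. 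Therefore $\operatorname{ord}_p\binom{b}{p^m}=0$, i.e. $|a_{p^m}|_p=1$; letting $m$ range over the infinitely many indices with $d_m\ne0$ gives $\varlimsup_n|a_n|_p^{1/n}\ge1$, so $R_p=1$. (This last computation is also a one-line consequence of the p-adic Lucas congruence $\binom{b}{p^m}\equiv d_m\pmod p$, should one prefer to cite it.)

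For part (ii) the idea is that a power series converges at every point strictly inside its disc of convergence: for $|\cdot|_p$ this is exactly the criterion recalled in Section 3 (the series converges whenever $|x|_p<R$), and for $|\cdot|_{\infty}$ it is the classical comparison with a convergent geometric series (absolute convergence for $|x|_{\infty}<R$). Hence, by part (i), the set of $x\in\mathbb{Q}$ lying inside the p-adic disc of convergence at which the series converges is precisely $\{x\in\mathbb{Q}:|x|_p<1\}$, and likewise $\{x\in\mathbb{Q}:|x|_{\infty}<1\}$ for the usual value; the set in the statement is just their intersection, rewritten for $x=u/v$ with $u,v\in\mathbb{Z}$, $(u,v)=1$. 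The translation uses $|x|_{\infty}<1\iff|u|_{\infty}<|v|_{\infty}$ together with $|x|_p<1\iff\operatorname{ord}_p(u)>\operatorname{ord}_p(v)$, the latter being equivalent to ``$p\mid u$ and $p\nmid v$'' once $\gcd(u,v)=1$ is used (so that at most one of $u,v$ is divisible by $p$). Finally I would note that the displayed ``$|u|_{\infty}\le|v|_{\infty}$'' may be read as ``$<$'' here: with $\gcd(u,v)=1$ the equality $|u|_{\infty}=|v|_{\infty}$ forces $|u|_{\infty}=|v|_{\infty}=1$, which contradicts $p\mid u$; thus on the displayed set the two versions coincide, and it equals $\{x\in\mathbb{Q}:|x|_{\infty}<1\}\cap\{x\in\mathbb{Q}:|x|_p<1\}$, as claimed.
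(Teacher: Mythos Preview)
Your argument is correct and, for part (i) in the archimedean case and for all of part (ii), it coincides with the paper's own proof: the ratio test for $|\cdot|_\infty$, the reference to Lemmas~(4.1)--(4.2) to obtain convergence on $D(1^-)$ in $|\cdot|_p$, and the direct translation of $|x|_\infty<1$, $|x|_p<1$ into conditions on a reduced fraction $u/v$.

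The one genuine difference is in the $p$-adic half of part (i). The paper stops after invoking Lemmas~(4.1)--(4.2) to conclude that $B(b,p,X)\in\mathbb{Z}_p[[X]]$ converges on $D(1^-)$ and then simply asserts that ``the radius of convergence \dots\ is equal to $1$'', without addressing the upper bound $R_p\le 1$. You supply that missing half by exhibiting infinitely many $n=p^m$ (those $m$ with $d_m\neq 0$ in the $p$-adic expansion of $b$) for which $\operatorname{ord}_p\binom{b}{p^m}=0$; the digit-counting computation and the appeal to the $p$-adic Lucas congruence are both valid routes to this. So your proof is strictly more complete than the paper's on this point, while otherwise following the same line. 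Your closing remark reconciling the paper's ``$|u|_\infty\le|v|_\infty$'' with the strict inequality is also a welcome clarification that the paper omits.
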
 
\begin{proof} (i)
 Let us write the coefficient $a_n=\frac{b(b-1) \cdots (b-n+1)}{n!}$.\\
If $R$ be the radius of convergence with respect to the usual value on $ \mathbb{Q}$, then by D'Alembert Ratio test 
\begin{align*}
\frac{1}{R}=\lim_{n \to \infty} \left| \frac{a_{n+1}}{a_n}\right| &=\lim_{n \to \infty} \left| \frac{b(b-1) \cdots (b-n+1)(b-n)}{(n+1)!} \times \frac{n!}{b(b-1) \cdots (b-n+1)} \right| \\ &=\lim_{n \to \infty} \left|\frac{b-n}{n+1} \right|=1. 
\end{align*}
Note that the condition $ b \notin \mathbb{Z}$ is assumed because if $b \in \mathbb{Z}$ then the coefficients $a_n \to 0$ as $ n \to \infty$ and hence in that case the radius of convergence is $ \infty$. \\ 
Again if $b \in \mathbb{Z}_p$, then by Lemma (4.2), $B(b,p,x) =\sum_{n=0}^{\infty} \frac{b(b-1) \cdots (b-n+1)}{n!}x^n \in \mathbb{Z}_p[[x]]$. Hence by the $ \text{Lemma 4.1}$, we conclude that the power series converges in $D(1^{-})$. Thus the radius of convergence with respect to the p-adic absolute $|.|_p$  i.e., on the p-adic field is equal to $1$. Therefore the radius of convergence of the power series is same with respect to both usual absolute value and p-adic absolute value. \\ (ii) \ In this part we have to characterise the rationals $x=\frac{u}{v}$, $ u,v \in \mathbb{Z}$ with $(u,v)=1$ such that the series converges as a series in $ \mathbb{R}$ as well as a series in $ \mathbb{Q}_p$. If the series convrges, it is necessary and sufficient by the very definition of the radius of convergence that \begin{eqnarray}
|x|_{\infty} <1 \\ |x|_p<1
\end{eqnarray}	
The condition $(1)$ implies $ \left|\frac{u}{v}\right|_{\infty}<1 $ i.e., $ |u|_{\infty} \leq |v|_{\infty}$ while the condition $(2)$ implies $ p|u$ but $p \nmid v$.	 Therefore, if $\sum_{n \geq 0} a_n x^n \in \mathbb{Q}[[x]]$ converging for $|x|_{\infty}<1$ and $|x|_p<1$, then the $ x \in \mathbb{Q}$ such that the series converges both in $\mathbb{R}$ and $ \mathbb{Q}_p$ within radius of convergence $1$ are characterised by $ x =\frac{u}{v}$ with $u,v \in \mathbb{Z}$ and $u,v$ are co-prime numbers with $p|u$ and $p \nmid v$. Therefore the rationals are characterised by the set $ \{x\in \mathbb{Q}: x=\frac{u}{v}, u,v \in \mathbb{Z}, (u,v)=1, \ |u|_{\infty} \leq |v|_{\infty}, \ p|u, p \nmid v  \}$.
This completes the theorem. 
\end{proof}

Next we proved the following important Lemma which will be used in Theorem (4.7), which are something like that was hinted by Professor Neal Koblitz, University of Washington. 
\begin{lem}
If there is an identity between two power series in $\mathbb{Q}[x]$ relating by addition,  multiplication and substitution, then it is an identity in the ring of formal power series $\mathbb{Q}[[X]]$.
\end{lem}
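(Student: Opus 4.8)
The plan is to deduce the statement from the identity theorem for convergent power series, the point being that this theorem holds verbatim whether one works over $\mathbb{R}$ with $|\cdot|_{\infty}$ or over $\mathbb{Q}_p$ with $|\cdot|_p$, so the conclusion is uniform in the two absolute values --- which is exactly how the Lemma is to be used in Theorem (4.7). First I would isolate the identity theorem: if $h(X)=\sum_{n \geq 0} c_n X^{n} \in \mathbb{Q}[[X]]$ has positive radius of convergence $\rho$ and $h(x)=0$ for every $x$ with $|x|<\varepsilon$ for some $0<\varepsilon \leq \rho$, then $c_n=0$ for all $n$. Indeed $c_0=h(0)=0$ by continuity of $h$ on $D(\varepsilon^{-})$; then $h(X)/X=\sum_{n \geq 1} c_n X^{n-1}$ again converges on $D(\varepsilon^{-})$ and vanishes there, forcing $c_1=0$; iterating gives $c_n=0$ for every $n$. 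This uses only continuity and division by $X$, hence is insensitive to which of the two absolute values is in force.

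Next I would invoke the standard fact that, inside a common disc of convergence, the \emph{formal} ring operations and \emph{formal} substitution of power series agree with the corresponding operations on their sums: if $f,g \in \mathbb{Q}[[X]]$ both converge on $D(r^{-})$, then $(f \pm g)(x)=f(x) \pm g(x)$ and $(f \cdot g)(x)=f(x) g(x)$ for $|x|<r$ --- the Cauchy product being a permissible rearrangement, absolutely convergent in the archimedean case and unconditionally convergent in the ultrametric case --- and if moreover $g(0)=0$, then the formal composite $f \circ g$ has positive radius of convergence and $(f \circ g)(x)=f(g(x))$ on a sufficiently small disc. These are classical and I would quote rather than reprove them.

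Combining the two ingredients, the Lemma follows by induction on the structure of the identity. By hypothesis the two power series, say $\Phi$ and $\Psi$, are each built from finitely many convergent series in $\mathbb{Q}[[X]]$ by a finite sequence of the operations $+,\ \times,\ \circ$ (each substitution applied with a zero-constant-term inner series, so that it is defined), and the relation $\Phi(x)=\Psi(x)$ is known to hold for all sufficiently small rational $x$. Induction on the number of operations used, via the previous paragraph, shows that $\Phi$ and $\Psi$ have positive radii of convergence and that, on a small enough common disc, $\Phi(x)$ and $\Psi(x)$ are literally the values of the respective analytic expressions; hence $\Phi(x)=\Psi(x)$ there. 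Therefore $H:=\Phi-\Psi \in \mathbb{Q}[[X]]$ converges on some $D(\varepsilon^{-})$ and vanishes on it, so by the identity theorem $H=0$ in $\mathbb{Q}[[X]]$, i.e. $\Phi=\Psi$ as formal power series.

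I expect the only genuinely delicate step to be the substitution case above: one must check that $f \circ g$ is a bona fide element of $\mathbb{Q}[[X]]$ --- which is precisely what forces the constant-term condition $g(0)=0$ (or that $g$ be a polynomial) --- and that its disc of convergence is nonempty and small enough that $g$ carries it into the disc of convergence of $f$. Keeping track of how the radii of convergence propagate through repeatedly nested operations is where all the care is needed; the remainder of the argument is purely formal, and nowhere does it matter whether the underlying absolute value is archimedean or $p$-adic.
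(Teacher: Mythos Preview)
Your argument is correct and rests on the same core device as the paper's own proof: reduce to showing that a convergent power series $H=\Phi-\Psi$ which vanishes on a small disc must have all coefficients zero. The paper establishes that step by invoking uniqueness of Taylor expansions, writing $c_n=h^{(n)}(0)/n!$ and observing that $h\equiv 0$ forces every $c_n=0$; you reach the same conclusion by the more elementary divide-by-$X$ iteration, which has the minor advantage of making the $p$-adic case visibly identical to the real one without appealing to differentiation.

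Where you genuinely go further than the paper is in taking the phrase ``relating by addition, multiplication and substitution'' seriously. The paper's proof simply posits two series with $f(x)=g(x)$ as functions and never explains why an identity built up through those operations actually yields such an equality of sums on a common disc; your inductive reduction --- checking that formal $+$, $\times$, and $\circ$ agree with their analytic counterparts on a small enough disc, with the necessary caveat $g(0)=0$ for composition --- supplies exactly that missing link. So your proof is the more complete of the two, while the paper's is shorter but leaves the structural part implicit.
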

\begin{proof}
	Let $f(x)=\sum_{n \geq 0}a_nx^n$ and $g(x)=\sum_{n \geq 0} b_n x^n$ be two power series in $\mathbb{Q}$. Consider the identity $f(x)=g(x)$ i.e., $\sum_{n \geq 0} a_nx^n=\sum_{n \geq 0}b_nx^n$. We will show that the same identity holds in the ring of formal power series. That is, we have to prove that $ \sum_{n \geq 0} a_nX^n=\sum_{n \geq 0}b_n X^n$. Let $h(x)=f(x)-g(x)=\sum_{n \geq 0}(a_n-b_n)x^n=0=\sum_{n \geq 0} c_nx^n$, say. Now Taylor series of an analytic function is unique and its coefficients are given by $c_n=\frac{h^{(n)}(0)}{n!}$. As $h(x)=0$, we must have $c_n=0, \forall n$. This implies $a_n-b_n=0, \forall n$ which in turn implies $a_n=b_n , \forall n$. Therefore the identity $ \sum_{n \geq 0}a_n X^n=\sum_{n \geq 0} b_n X^n$ follows immediately in the ring of formal power series.    
\end{proof} 
Now we proved the following Lemma which will be used as tool in Theorem (4.7), (4.11) and (4.12).   
\begin{lem}
	If $u,v$ are coprime integers and if $p$ does not divide $v$, then the condition $u^N \equiv v^N$ (mod $p$) for $N \in \mathbb{N}$ is equivalent to the condition $ \left| \left(\frac{u}{v}\right)^N-1\right|_p<1.$ 
	\begin{proof}
		Let $u,v$ be coprime integers with $p \nmid v$. Then p-adic norm of $v$ is equal to $1$ i.e., $|v|_p=1$. Now, \\
		$ \left| \left(\frac{u}{v}\right)^N-1\right|_p<1 \\ \Leftrightarrow |u^N-v^N|_p<|v|_p^N \\ \Leftrightarrow |u^N-v^N|_p<1, \ (\because |v|_p=1), \\ \Leftrightarrow p | (u^N-v^N) \\ \Leftrightarrow u^N \equiv v^N \ \text{(mod \ p)}. $ \\ This completes the proof.
	\end{proof}
\end{lem}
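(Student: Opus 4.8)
The plan is to collapse the asserted equivalence to a single observation about the $p$-adic valuation of the integer $u^N-v^N$. First I would extract from the hypothesis $p\nmid v$ that $\text{ord}_p(v)=0$, hence $|v|_p=1$ and consequently $|v^N|_p=|v|_p^{\,N}=1$; in particular $v$ (and $v^N$) is a unit in $\mathbb{Z}_p$, so $(u/v)^N\in\mathbb{Q}_p$ is well defined and there is no hidden divergence to account for. (Coprimality together with $p\nmid v$ also guarantees $v\neq 0$, so this step is not vacuous.)

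Next I would rewrite the quantity in the analytic condition. Writing
$$\left(\frac{u}{v}\right)^{N}-1 \;=\; \frac{u^{N}-v^{N}}{v^{N}},$$
and using multiplicativity of $|\cdot|_p$ together with $|v^{N}|_p=1$, I obtain
$$\left|\left(\frac{u}{v}\right)^{N}-1\right|_p \;=\; \frac{\bigl|u^{N}-v^{N}\bigr|_p}{\bigl|v^{N}\bigr|_p} \;=\; \bigl|u^{N}-v^{N}\bigr|_p .$$
Thus the condition $\bigl|(u/v)^N-1\bigr|_p<1$ is literally the same as $\bigl|u^N-v^N\bigr|_p<1$.

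It then remains to identify $\bigl|m\bigr|_p<1$ with $p\mid m$ for the integer $m:=u^{N}-v^{N}$. This is the only place a genuine property of $\mathbb{Q}_p$ enters, and it is the discreteness of the valuation recorded when $|\cdot|_p$ was defined: since $|\cdot|_p$ takes values in $\{p^{k}:k\in\mathbb{Z}\}\cup\{0\}$, for the integer $m$ either $m=0$ — in which case both sides of the asserted equivalence hold trivially — or $|m|_p=p^{-\text{ord}_p(m)}$ with $\text{ord}_p(m)\ge 0$, and then $|m|_p<1$ is equivalent to $\text{ord}_p(m)\ge 1$, i.e.\ to $p\mid m$, which is exactly $u^{N}\equiv v^{N}\pmod p$. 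Reading the three equivalences in sequence yields the lemma.

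I do not anticipate a real obstacle. The one point to be careful about is to keep the inequality \emph{strict}: "$|m|_p\le 1$'' holds for every integer and carries no information, whereas "$|m|_p<1$'' is forced by discreteness to mean "$|m|_p\le p^{-1}$'', which is honest divisibility by $p$. Beyond this, the argument is just a translation between $\text{ord}_p$-inequalities, $|\cdot|_p$-inequalities, and congruences, all of which were set up in Section 3.
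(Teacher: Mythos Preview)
Your proposal is correct and follows essentially the same route as the paper: both arguments use $p\nmid v\Rightarrow|v|_p=1$ to reduce $\bigl|(u/v)^N-1\bigr|_p$ to $\bigl|u^N-v^N\bigr|_p$, and then invoke the equivalence $|m|_p<1\Leftrightarrow p\mid m$ for an integer $m$. Your write-up is a bit more explicit about discreteness and the degenerate case $m=0$, but there is no substantive difference in strategy.
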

Now we consider the power series considered in Theorem $(4.3)$ given by $$B(b,p,X)=\sum_{n=0}^{\infty} \frac{b(b-1) \cdots (b-n+1)}{n!}X^n\in \mathbb{Z}_p[[X]], \ b \in \mathbb{Z}_p . $$ Now we prove a theorem as follows:
\begin{thm}
	If $b$ is rational number specially $b=\frac{1}{N}, \ N \in \mathbb{Z}$, $p \nmid N$ and if $X=\left(\frac{u}{v}\right)^N-1, \ u,v, N$ are integers with $ (u,v)=1, \ p \nmid v$ and $ u^N \equiv v^N$ (mod $p$), then the series of rational numbers $B(b,p,X)= \sum_{n=0}^{\infty} \frac{\frac{1}{N}(\frac{1}{N}-1) \cdots (\frac{1}{N}-n+1)}{n!}X^n$ converges to a rational number $\mathbb{Q}_p$ while it converges to a rational number in $\mathbb{R}$ if $ X= \left|\left(\frac{u}{v}\right)^N-1\right|<1, \ v \neq 0$.
\end{thm}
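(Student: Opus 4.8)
The plan is to recognise $B\!\left(\tfrac1N,p,X\right)$ as the formal binomial series of $(1+X)^{1/N}$, so that the substitution $1+X=(u/v)^{N}$ ought to return $u/v$. \textbf{Step 1: a formal identity.} First I would establish
$$B\!\left(\tfrac1N,p,X\right)^{N}=1+X \qquad\text{in } \mathbb{Q}[[X]].$$
Over $\mathbb{R}$, the series $\sum_{n\geq0}\binom{1/N}{n}x^{n}$ has radius of convergence $1$ by Theorem 4.3(i) (indeed $b=\tfrac1N\in\mathbb{Q}\cap(\mathbb{Z}_{p}\setminus\mathbb{Z})$, since $p\nmid N$ and $N\geq2$ in the non-trivial case), and for $|x|<1$ it sums to the positive real root $(1+x)^{1/N}$, whence $\bigl(\sum_{n\geq0}\binom{1/N}{n}x^{n}\bigr)^{N}=1+x$ as real functions on $(-1,1)$. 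Both sides are power series with rational coefficients obtained from one another by multiplication and addition, so Lemma 4.4 upgrades this to an identity in $\mathbb{Q}[[X]]$.

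\textbf{Step 2: legitimacy of the substitution.} Put $X_{0}:=\left(\tfrac uv\right)^{N}-1$. Since $p\nmid N$ we have $\tfrac1N\in\mathbb{Z}_{p}$, so $B\!\left(\tfrac1N,p,X\right)\in\mathbb{Z}_{p}[[X]]$ by Lemma 4.2 and it converges on $D(1^{-})$ by Lemma 4.1. By Lemma 4.6 the hypothesis $u^{N}\equiv v^{N}\pmod{p}$ (with $(u,v)=1$, $p\nmid v$) is exactly $|X_{0}|_{p}<1$, hence $X_{0}\in D(1^{-})$. Since a finite product of power series convergent on $D(1^{-})$ is again convergent there --- the Cauchy product being unconditionally convergent in a complete non-Archimedean field --- the identity of Step 1 evaluates at $X_{0}$ to
$$S^{N}=\left(\tfrac uv\right)^{N}\quad\text{in } \mathbb{Q}_{p},\qquad S:=B\!\left(\tfrac1N,p,X_{0}\right).$$
For the real assertion, the extra hypothesis $\bigl|\left(\tfrac uv\right)^{N}-1\bigr|<1$ puts $X_{0}$ inside the real radius of convergence $1$, and the same identity gives $S_{\infty}^{N}=(u/v)^{N}$ for the real sum $S_{\infty}=\sum_{n\geq0}\binom{1/N}{n}X_{0}^{n}$.

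\textbf{Step 3: identifying the sums.} Over $\mathbb{R}$ this is immediate: $1+X_{0}=(u/v)^{N}>0$, so $S_{\infty}$ is the positive real $N$th root of $(u/v)^{N}$, namely $S_{\infty}=|u/v|\in\mathbb{Q}$. Over $\mathbb{Q}_{p}$, because $\binom{1/N}{n}\in\mathbb{Z}_{p}$ and $|X_{0}^{n}|_{p}\leq|X_{0}|_{p}<1$ for $n\geq1$, the strong triangle inequality gives $|S-1|_{p}<1$; thus $S$ is an $N$th root of $(u/v)^{N}$ lying in $1+p\mathbb{Z}_{p}$. As $p\nmid N$, the polynomial $Y^{N}-(u/v)^{N}$ reduces modulo $p$ to one with a simple root at $Y=1$, so Hensel's lemma provides a unique $N$th root of $(u/v)^{N}$ in $1+p\mathbb{Z}_{p}$; this determines $S$ as that ``principal'' root, and then $S=u/v$ provided $u/v\in1+p\mathbb{Z}_{p}$, which likewise gives $S_{\infty}=|u/v|$.

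\textbf{Main obstacle.} The delicate point is precisely this last identification. To conclude $S=u/v$ one appears to require $u\equiv v\pmod p$, not merely $u^{N}\equiv v^{N}\pmod p$, because $(u/v)^{N}\equiv1\pmod p$ does not force $u/v\equiv1\pmod p$, and the branch of the $N$th root singled out by the binomial series may then be a genuine element of $\mathbb{Q}_{p}\setminus\mathbb{Q}$. The argument must therefore control $u/v$ modulo $p$ --- equivalently, read the congruence hypothesis as $u\equiv v\pmod p$; once this is in force, $u/v\in1+p\mathbb{Z}_{p}$ is an $N$th root of $(u/v)^{N}$ congruent to $1$, Hensel's uniqueness forces $S=u/v$, and the real case gives $S_{\infty}=|u/v|$, completing the proof.
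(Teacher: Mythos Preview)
Your approach is essentially the same as the paper's: establish the formal identity $B(\tfrac1N,p,X)=(1+X)^{1/N}$ via the lemma on identities of power series (what you call Lemma~4.4 is the paper's Lemma~4.5), verify $|X_0|_p<1$ through Lemma~4.6, and then substitute $X_0=(u/v)^N-1$ to obtain a rational value. The paper carries this out in two lines, simply writing $(1+X)^{1/N}=\bigl[1+(u/v)^N-1\bigr]^{1/N}=u/v$ (with an evident typo in the exponent) and declaring the result rational.

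Your Step~3 and the ``Main obstacle'' paragraph go well beyond the paper and are, in fact, more careful than the original. The paper's proof does not justify \emph{which} $N$th root of $(u/v)^N$ the series picks out; it simply asserts the answer is $u/v$. You correctly isolate the real content here: one knows only $S^N=(u/v)^N$ with $S\equiv 1\pmod p$, and Hensel's lemma gives a unique such root, but that root equals $u/v$ only when $u/v\equiv 1\pmod p$, i.e.\ when $u\equiv v\pmod p$ rather than merely $u^N\equiv v^N\pmod p$. Your counterexample-style reasoning is sound: for instance with $p=7$, $N=3$, $u=2$, $v=1$ one has $u^N\equiv v^N\pmod 7$ yet the binomial sum is $2\omega^2$ for $\omega$ a primitive cube root of unity in $\mathbb{Q}_7$, which is not rational. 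So your identified obstacle is a genuine gap in the paper's argument (and arguably in the hypothesis of the theorem itself), not a defect in your proposal. Under the strengthened hypothesis $u\equiv v\pmod p$ your proof is complete and correct; the paper's proof, by contrast, silently assumes this identification goes through.
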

\begin{proof} 
	Let $b=\frac{1}{N}$, $N \in \mathbb{Z}$, $p \nmid N$. Let $x \in D(1^{-})$, then the identity $\left[ B(\frac{1}{N},p,x) \right]=(1+x)^{\frac{1}{N}}$ holds. By Lemma $(4.5)$ the same identity holds in the ring of formal power series as follows: $$ B(\frac{1}{N},p,X)=(1+X)^{\frac{1}{N}}.$$
	So for rationals $b=\frac{1}{N}$, we can write $$ B(b,p,X)=(1+X)^b=\sum_{n=0}^{\infty} \frac{b(b-1) \cdots (b-n+1)}{n!}X^n.$$ 
	Now according to Lemma (4.6), the conditions $(u,v)=1, \ p \nmid v$ and $ u^N \equiv v^N$ (mod $p$) ensures that $ \left| X=\left(\frac{u}{v}\right)^N-1\right|_p<1$ and hence the given power series converges by Theorem (4.3).\\
	Thus to find the sum of the series $ \sum_{n=0}^{\infty} \frac{\frac{1}{N}(\frac{1}{N}-1) \cdots (\frac{1}{N}-n+1)}{n!}X^n$, we just have to find the value of $(1+X)^{\frac{1}{N}}$. Now if $x=\left(\frac{u}{v}\right)^N-1, \ u,v, N$ are integers with $ (u,v)=1$, then $$ \sum_{n=0}^{\infty} \frac{\frac{1}{N}(\frac{1}{N}-1) \cdots (\frac{1}{N}-n+1)}{n!}X^n=(1+X)^{\frac{1}{N}}=\left[ 1+\left(\frac{u}{v} \right)^N -1\right]=\left(\frac{u}{v}\right)^N \in \mathbb{Q},$$
	since $ u,v$ are integers with $(u,v)=1$. This holds in p-adic norm for all primes $p$. The case in real norm follows similarly. 
	\end{proof} 
Now we will prove another theorem involving the binomial series and having rational summation under certain condition.
\begin{thm}
	Let $v \in V=\{2,3, \cdots, p, \cdots \}$ and define $$F_v(x)=\sum_{n=0}^{\infty} \binom{b}{n}x^n=\sum_{n=0}^{\infty} \frac{b(b-1) \cdots (b-n+1)}{n!}x^n, \ x \in \mathbb{Q}_v,$$  with $|x|_v<1$, $b=\frac{r}{s} \in \mathbb{Q}$ and $|b|_v \leq 1$. If for a fixed prime $v'$ and chosen $x \in \mathbb{Q}_{v'}$, $F_{v'}(x)=w \in \mathbb{Q}$, then $F_v(x) \in \mathbb{Q}$ for all prime $v$ if $| \pm c-1|_v<1$ is satisfied.  
\end{thm}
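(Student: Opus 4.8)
The plan is to reduce everything to a single formal power‑series identity and then to a uniqueness statement for $s$‑th roots inside the group of principal units $1+v\mathbb{Z}_v$. We may assume $b\neq0$, since otherwise $F_v(x)\equiv1$ and there is nothing to prove. Write $b=\frac{r}{s}$ with $r,s\in\mathbb{Z}$, $s\neq0$. Over a real variable $X\in(-1,1)$ the binomial series $\sum_{n\ge0}\binom{b}{n}X^n$ is literally $(1+X)^{r/s}$, so $\big(\sum_{n\ge0}\binom{b}{n}X^n\big)^s=(1+X)^r$ holds there; by Lemma 4.5 this becomes an identity in $\mathbb{Q}[[X]]$, obtained from the binomial series by multiplication and (when $r<0$) substitution. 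Substituting any $x\in\mathbb{Q}_v$ with $|x|_v<1$ — where $F_v(x)$ converges thanks to $|b|_v\le1$, by Theorem 4.3 — then yields the relation $[F_v(x)]^s=(1+x)^r$ in $\mathbb{Q}_v$.

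Next I would specialise this to the prime $v'$ and the chosen $x$. The hypothesis $F_{v'}(x)=w\in\mathbb{Q}$ turns $[F_{v'}(x)]^s=(1+x)^r$ into the rational equation $(1+x)^r=w^s$ (if one wishes to keep $x\in\mathbb{Q}_{v'}$ rather than $x\in\mathbb{Q}$, one should, exactly as in Theorem 4.7, restrict to rational $x$ so that $F_v(x)$ is meaningful at the remaining primes). Moreover, since $\binom{b}{n}\in\mathbb{Z}_{v'}$ by Lemma 4.2 and $|x|_{v'}<1$, each term $\binom{b}{n}x^n$ with $n\ge1$ has $|.|_{v'}<1$, whence $w=1+\sum_{n\ge1}\binom{b}{n}x^n$ satisfies $|w-1|_{v'}<1$; that is, $w$ is a principal unit at $v'$.

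Now fix an arbitrary prime $v$. I read the condition $|{\pm}c-1|_v<1$ with $c=w$, so that one of $\pm w$ — call it $w_0\in\mathbb{Q}$ — lies in $1+v\mathbb{Z}_v$. Whenever $F_v(x)$ converges it too lies in $1+v\mathbb{Z}_v$, and by the first paragraph $[F_v(x)]^s=(1+x)^r=w^s=(\pm1)^sw_0^s$. The sign $(\pm1)^s$ causes no harm: for odd $v$ a residue argument shows $(1+x)^r$ and $w_0^s$ are both $\equiv1\pmod v$, forcing the sign to be $+1$, while for $v=2$ the sign is precisely what the $\pm$ in the hypothesis is there to absorb. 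Finally, the $s$‑th power map is injective on $1+v\mathbb{Z}_v$ for odd $v$ — via the $v$‑adic logarithm it becomes multiplication by $s$ on the torsion‑free group $v\mathbb{Z}_v\cong\mathbb{Z}_v$ — and injective on $1+4\mathbb{Z}_2$ when $v=2$. Hence $F_v(x)=w_0=\pm w\in\mathbb{Q}$, which is the desired conclusion.

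The main obstacle is keeping the two genuinely local subtleties under control. First, $F_v(x)$ must actually converge at the other primes, i.e.\ $|x|_v<1$ must hold there; the hypothesis has to be understood as supplying this alongside the principal‑unit condition on $\pm w$, and it cannot be dropped (a rational $x$ with $|x|_v<1$ for every $v$ would be forced to equal $0$). Second, the implication $y^s=z^s\Rightarrow y=z$ genuinely fails for $v=2$ and even $s$, because $-1\in1+2\mathbb{Z}_2$ is $2$‑torsion; reconciling this residual sign with the rational equation $(1+x)^r=w^s$ is the technical heart of the argument. Everything else — commuting substitution with the algebraic operations on convergent $v$‑adic series, and the pro‑$v$ structure of $1+v\mathbb{Z}_v$ — is routine bookkeeping.
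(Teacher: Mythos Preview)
Your proof takes essentially the same route as the paper's: establish $[F_v(x)]^s=(1+x)^r$ via the formal identity (Lemma~4.5 and Theorem~4.3), then invoke uniqueness of the $s$-th root inside the principal-unit disc $\{y:|y-1|_v<1\}$ to identify $F_v(x)$ with $\pm w$. The paper phrases uniqueness via roots of unity---any two solutions differ by an $s$-th root of unity $\zeta_v$, and the condition $|\pm c-1|_v<1$ forces $\zeta_v=1$---whereas you argue directly with the $v$-adic logarithm; these are equivalent, and your discussion of the $v=2$ sign ambiguity and of the implicit convergence hypothesis $|x|_v<1$ is in fact more careful than the paper's own.
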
	
\begin{proof}
	Given $v \in V=\{2,3, \cdots, p, \cdots \}, \ b=\frac{r}{s}$ with $|b|_v \leq 1$ i.e., $b \in \mathbb{Z}_v$. \\ Since $ b \in \mathbb{Z}_v$, by Lemma (4.2), we conclude $ \binom{b}{n} \in \mathbb{Z}_v$. By Theorem (4.3), $F_v(x)$ converges for $|x|_v<1$. Fix a prime and choose a fixed $x \in \mathbb{Q}_p$, then $y=F_p(x)$ is the unique element $ y \in \mathbb{Q}_p$ such that $y^s=(1+x)^r$ satisfying $|y-1|_v<1$. Here $y$ is unique because it is the unique root of $Y^s=(1+x)^r$ in the disc $\{y: \ |y-1|_v<1 \}$ as in the case of binomial series there exists such an unique root $y$ while the rest roots are of the form $Y=y \zeta_v$, where $\zeta_v$ is some $s^{th}$  root of unity. \\ Now given that for the prime $v' \in V$, we have $F_{v'}(x)=c \in \mathbb{Q}$. Then for every prime $v \in V$ satisfying $|x|_v<1$, then is some $ s^{th}$ root of unity $\zeta_v$ such that $$ F_v(x) \zeta_v=c \in \mathbb{Q} \ \text{and} \ \zeta_v=1 \ \text{if} \ |\pm c-1|_v<1.$$ $$ i.e., \ F_v(x)=c \in \mathbb{Q} \ \text{if} \ |\pm c-1|_v<1.$$ Therefore for all $v \in V$,  $ F_v(x)=\sum_{n=0}^{\infty} \binom{b}{n}x^n=c \in \mathbb{Q} $ if $|\pm c-1|_v<1$. 
	 
\end{proof}

Next we constructed a power series and showed that the power series convreges for all $x \in \mathbb{R}$ and all $x \in \mathbb{Q}_p$ for every  prime $p$. The newly constructed power series is the following \begin{align}
	\phi_{\gamma, \delta}^{\epsilon, q}=\sum_{n=0}^{\infty} \epsilon^n \frac{((\gamma n+\delta)!)^{\gamma n+\delta}}{q+((\gamma n+\delta)!)^{N(\gamma n+\delta})} \frac{x^{\gamma n+\delta}}{(\gamma n+\delta)!}.
\end{align} 
The next theorem highlights the above fact.
\begin{thm}
	The power series $\phi_{\gamma, \delta}^{\epsilon, q}=\sum_{n=0}^{\infty} \epsilon^n \frac{((\gamma n+\delta)!)^{\gamma n+\delta}}{q+((\gamma n+\delta)!)^{N(\gamma n+\delta})} \frac{x^{\gamma n+\delta}}{(\gamma n+\delta)!}$, where $\epsilon=\pm 1$, $0<q \in \mathbb{Q}$, $\gamma \in \mathbb{N}$, $\delta \in \mathbb{N} \cup \{0 \}$ converges for all $x \in \mathbb{R}$ and for all $x \in \mathbb{Q}_p$ for every prime $p$.
\end{thm}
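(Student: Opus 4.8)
The plan is to establish convergence separately for the real norm and for each $p$-adic norm, in both cases reducing to a single clean estimate on the general coefficient. Throughout put $m_n=\gamma n+\delta$, so that $m_0<m_1<m_2<\cdots$ is a strictly increasing sequence in $\mathbb{N}\cup\{0\}$, and write the $n$-th term of the series as
$$t_n(x)=\epsilon^{\,n}\,\frac{(m_n!)^{m_n}}{q+(m_n!)^{N m_n}}\cdot\frac{x^{m_n}}{m_n!}.$$
The one elementary observation driving everything is that, since $q>0$ and $N m_n\ge m_n$ (with $N\ge1$ the integer exponent in the denominator),
$$0<\frac{(m_n!)^{m_n}}{q+(m_n!)^{N m_n}}\le\frac{(m_n!)^{m_n}}{(m_n!)^{N m_n}}\le 1 ,$$
so that over $\mathbb{R}$ one has $|t_n(x)|\le |x|^{m_n}/m_n!$ for every $x$, and $|\epsilon^{\,n}|_p=1$ in every $p$-adic norm.

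For the real statement this is already enough: since $n\mapsto m_n$ is strictly increasing,
$$\sum_{n=0}^{\infty}|t_n(x)|\ \le\ \sum_{n=0}^{\infty}\frac{|x|^{m_n}}{m_n!}\ \le\ \sum_{k=0}^{\infty}\frac{|x|^{k}}{k!}\ =\ e^{|x|}\ <\ \infty ,$$
so $\phi_{\gamma,\delta}^{\epsilon,q}(x)$ converges absolutely for every $x\in\mathbb{R}$. (Equivalently, the same bound forces the non-zero coefficients $a_{m_n}$ to satisfy $\varlimsup\sqrt[m_n]{|a_{m_n}|}=0$, so Hadamard's formula gives radius of convergence $+\infty$.)

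For a fixed prime $p$ I would invoke the criterion recalled in Section 3: a series in the complete non-Archimedean field $\mathbb{Q}_p$ converges if and only if its terms tend to $0$. So fix $x\in\mathbb{Q}_p$; it suffices to show $|t_n(x)|_p\to0$. First control the denominator: $|(m_n!)^{N m_n}|_p=p^{-N m_n\,\mathrm{ord}_p(m_n!)}$, and $\mathrm{ord}_p(m_n!)\to\infty$ as $n\to\infty$ (by Legendre's formula $\mathrm{ord}_p(m!)=(m-s_p(m))/(p-1)$, or simply $\mathrm{ord}_p(m!)\ge\lfloor m/p\rfloor$), hence $|(m_n!)^{N m_n}|_p\to0$. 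Since $|q|_p$ is a fixed non-zero number, for all $n$ large we have $|(m_n!)^{N m_n}|_p<|q|_p$, and then the strong triangle inequality in its sharp form ($|a+b|_p=\max\{|a|_p,|b|_p\}$ whenever $|a|_p\ne|b|_p$) gives $|q+(m_n!)^{N m_n}|_p=|q|_p$. For such $n$,
$$|t_n(x)|_p=\frac{|m_n!|_p^{\,m_n}\,|x|_p^{\,m_n}}{|q|_p\,|m_n!|_p}=\frac{1}{|q|_p}\,|m_n!|_p^{\,m_n-1}\,|x|_p^{\,m_n},$$
i.e. $\mathrm{ord}_p\bigl(t_n(x)\bigr)=(m_n-1)\,\mathrm{ord}_p(m_n!)+m_n\,\mathrm{ord}_p(x)-\mathrm{ord}_p(q)$.

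The only genuinely delicate point, which I expect to be the main obstacle, is to check that this quantity tends to $+\infty$ even when $|x|_p>1$, i.e.\ $\mathrm{ord}_p(x)<0$ --- the regime where the exploding numerator $(m_n!)^{m_n}$ and the factor $x^{m_n}$ both push the wrong way. Write $e_n:=\mathrm{ord}_p(m_n!)\to\infty$ and $k:=-\mathrm{ord}_p(x)\ge0$. Choosing $n$ large enough that $e_n>2k$,
$$(m_n-1)e_n+m_n\,\mathrm{ord}_p(x)=(m_n-1)e_n-m_n k\ \ge\ e_n\Bigl(m_n-1-\tfrac{m_n}{2}\Bigr)=e_n\Bigl(\tfrac{m_n}{2}-1\Bigr)\ \longrightarrow\ \infty ,$$
so $\mathrm{ord}_p(t_n(x))\to\infty$, that is $|t_n(x)|_p\to0$; hence the series converges in $\mathbb{Q}_p$. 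As $p$ was arbitrary, the series converges in $\mathbb{Q}_p$ for every prime $p$, and together with the real case this completes the proof. The argument is uniform in $\epsilon=\pm1$, $q>0$, $\gamma\ge1$, $\delta\ge0$ and the exponent $N\ge1$: none of the precise values entered beyond these constraints.
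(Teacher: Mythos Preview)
Your proof is correct and follows essentially the same strategy as the paper: treat the real and $p$-adic cases separately, in the $p$-adic case use the strong triangle inequality to reduce $|q+(m_n!)^{Nm_n}|_p$ to $|q|_p$ for large $n$, and then appeal to Legendre's formula for $\mathrm{ord}_p(m_n!)$ to force the general term to $0$. Your real-case argument (comparison with the exponential series) and your explicit handling of the regime $|x|_p>1$ are in fact more careful than the paper's, but the underlying ideas coincide.
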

\begin{proof}
	For real case, 
	\begin{align}
		\lim_{n \to \infty} I_{\gamma n +\delta}^{(q)}=\lim_{n \to \infty} \frac{((\gamma n+\delta)!)^{\gamma n+\delta}}{q+((\gamma n+\delta)!)^{N(\gamma n+\delta})}=0.
	\end{align} Thus the power series is convergent at all real $x$. \\ For p-adic case, 
	\begin{align}
		\left|\epsilon^n I_{\gamma n+\delta}^{(q)} \frac{x^{\gamma n+\delta}}{(\gamma n+\delta)!} \right|_p=\frac{\left| (\gamma n+\delta)! \right|_p^{\gamma n+\delta -1}}{\left| q+((\gamma n+\delta)!)^{N(\gamma n+\delta)}\right|_p}|x|_p^{\gamma n+\delta}.
	\end{align}
	By strong triangle inequality of p-adic norm, 
	\begin{align}
		|q+((\gamma n+\delta)!)^{N(\gamma n+\delta)}|_p=|q|_p
	\end{align} for large enough $n$. Now we know that 
	\begin{align}
		|n!|_p=p^{-\frac{n-s_p(n)}{p-1}}, 
	\end{align}
	where $s_p(n)$ is the sum of the base $p$ digits of $n$. According to $(7)$, the numerator of $(5)$ becomes 
	\begin{align}
		|(\gamma n+\delta)!|_p^{\gamma n+\delta -1} |x|_p^{\gamma n+\delta}=\left(p^{-\frac{(\gamma n+\delta)-s_p(\gamma n+\delta)}{(p-1)} \frac{(\gamma n+\delta-1)}{(\gamma n+\delta)}} \right)^{\gamma n+\delta} \to 0 \ \text{as} \ n \to \infty,
	\end{align}
	which is valid for any $p$ and all $x \in \mathbb{Q}_p$. Hence the power series converges for all $x \in \mathbb{Q}_p$ and for all $p$. From $(4)$ and $(8)$, we conclude that the given power series converges everywhere in $\mathbb{R}$ as well as in $\mathbb{Q}_p$.
\end{proof}
\begin{cor}
	The power series $\phi_{\gamma, \delta}^{\epsilon, q}=\sum_{n=0}^{\infty} \epsilon^n \frac{((\gamma n+\delta)!)^{\gamma n+\delta}}{q+((\gamma n+\delta)!)^{P_k(n)(\gamma n+\delta})} \frac{x^{\gamma n+\delta}}{(\gamma n+\delta)!}$ converges everywhere in $\mathbb{R}$ and in $\mathbb{Q}_p$, where $P_k(n)$ is a polynomial in $n$ of degree $k$. 
\end{cor}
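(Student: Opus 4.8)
The plan is to mimic the proof of Theorem (4.8) essentially verbatim, replacing the constant exponent factor $N$ with the polynomial $P_k(n)$ and checking that every estimate still goes through. First I would treat the real case: I would show that
\begin{align}
\lim_{n \to \infty} \frac{((\gamma n+\delta)!)^{\gamma n+\delta}}{q+((\gamma n+\delta)!)^{P_k(n)(\gamma n+\delta)}} = 0,
\end{align}
which holds because, for $n$ large, $P_k(n) \geq 2$ (its leading coefficient being positive, or more carefully $P_k(n) \to +\infty$), so the denominator dominates the numerator by a factor that grows at least like $((\gamma n+\delta)!)^{\gamma n+\delta}$; together with the extra $x^{\gamma n+\delta}/(\gamma n+\delta)!$ this forces the general term to $0$ for every fixed real $x$, giving convergence on all of $\mathbb{R}$.

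Next I would handle the $p$-adic case. As in Theorem (4.8), the strong triangle inequality gives $|q + ((\gamma n+\delta)!)^{P_k(n)(\gamma n+\delta)}|_p = |q|_p$ for $n$ large enough, since $|((\gamma n+\delta)!)^{P_k(n)(\gamma n+\delta)}|_p = |(\gamma n+\delta)!|_p^{P_k(n)(\gamma n+\delta)} \to 0$ as $n \to \infty$ (because $P_k(n)(\gamma n+\delta) \to \infty$ and $|(\gamma n+\delta)!|_p < 1$). Then the $|\cdot|_p$-value of the $n$th term becomes
\begin{align}
\frac{|(\gamma n+\delta)!|_p^{(\gamma n+\delta) - 1}}{|q|_p}\,|x|_p^{\gamma n+\delta},
\end{align}
and substituting the Legendre formula $|m!|_p = p^{-(m - s_p(m))/(p-1)}$ with $m = \gamma n+\delta$ shows this is bounded by $\left(p^{-\frac{(\gamma n+\delta)-s_p(\gamma n+\delta)}{p-1}\cdot\frac{\gamma n+\delta-1}{\gamma n+\delta}}\right)^{\gamma n+\delta}|x|_p^{\gamma n+\delta}/|q|_p$, which tends to $0$ as $n \to \infty$ for every prime $p$ and every $x \in \mathbb{Q}_p$. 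Hence the series converges $p$-adically everywhere.

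The key observation that makes this a genuine corollary rather than a fresh theorem is that the proof of Theorem (4.8) never used anything about $N$ beyond the facts that $N$ is a fixed positive integer and that $N(\gamma n+\delta) \to \infty$; both of these persist when $N$ is replaced by $P_k(n)$, provided $P_k(n)$ is eventually positive and unbounded. The only mild obstacle is a bookkeeping one: one must be slightly careful that $P_k(n)$ is eventually $\geq 2$ (so the denominator really dominates in the real case) and eventually positive (so the $p$-adic valuation of the denominator's large term still goes to zero). I would state this explicitly at the start — interpreting "polynomial in $n$ of degree $k$" as having positive leading coefficient so that $P_k(n) \to +\infty$ — and then the two displays above finish the argument exactly as in Theorem (4.8).
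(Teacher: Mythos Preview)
Your proposal is correct and matches the paper's intent: the corollary is stated immediately after Theorem~(4.8) with no separate proof, so the paper treats it as following directly from that argument with $N$ replaced by $P_k(n)$. Your explicit identification of the implicit hypothesis that $P_k(n)\to+\infty$ (positive leading coefficient) is a useful clarification the paper omits.
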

Starting \ with \ above \ series  \ $(3)$, we derive a summation formula giving rational sum. 
\begin{thm}
	The summation formula 
	\begin{equation}
	\begin{aligned}
	& \sum_{n=0}^{\infty} ((\gamma n+\delta)!)^{\gamma n+\delta -1} x^{\gamma n} \left\{\frac{[(\gamma (n+1)+\delta)!]^{\gamma} (\gamma n+\delta +1)_{\gamma}^{\gamma n+\delta -1}}{q+[(\gamma (n+1)+\delta)!]^{N(\gamma(n+1)+\delta)}} x^{\gamma}- \frac{1}{q+((\gamma n+\delta)!)^{N(\gamma n+\delta)}} \right\} \\
	&=-\frac{(\delta !)^{\delta -1}}{q+(\delta !)^{N \delta}} ,
	\end{aligned}
	\end{equation}
	where $(\gamma n+\delta+1)_{\gamma}=(\gamma n+\delta+1)(\gamma n+\delta+2) \cdots (\gamma n+\delta+\gamma)$, is valid for all non-zero $x \in \mathbb{R}$ as well as for all non-zero $x \in \mathbb{Q}_p$ for every prime $p$.
\end{thm}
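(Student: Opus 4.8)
The plan is to recognise the left-hand side of $(9)$ as a telescoping series. Introduce the abbreviation
$$ b_n=\frac{((\gamma n+\delta)!)^{\gamma n+\delta-1}\,x^{\gamma n}}{q+((\gamma n+\delta)!)^{N(\gamma n+\delta)}},\qquad n\ge 0, $$
so that, by the way the quantity $I^{(q)}_{\gamma n+\delta}$ was introduced in the proof of Theorem $(4.8)$, the general term of the series $(3)$ equals $\epsilon^{n}x^{\delta}b_n$. I want to reduce everything to the claim that the $n$-th summand of $(9)$ is exactly $b_{n+1}-b_n$; granting this, $(9)$ is pure telescoping.

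First I would establish that algebraic identity. The only input is the factorial relation $(\gamma(n+1)+\delta)!=(\gamma n+\delta)!\,(\gamma n+\delta+1)_{\gamma}$, with $(\gamma n+\delta+1)_{\gamma}=\prod_{j=1}^{\gamma}(\gamma n+\delta+j)$ as in the statement. Substituting this into the first term inside the braces of $(9)$ and multiplying by the prefactor $((\gamma n+\delta)!)^{\gamma n+\delta-1}x^{\gamma n}$, the exponents of $(\gamma n+\delta)!$ and of $(\gamma n+\delta+1)_{\gamma}$ both collapse to $\gamma(n+1)+\delta-1$, so their product is $((\gamma(n+1)+\delta)!)^{\gamma(n+1)+\delta-1}$; the power of $x$ becomes $x^{\gamma(n+1)}$; and the denominator is already $q+((\gamma(n+1)+\delta)!)^{N(\gamma(n+1)+\delta)}$. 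Hence that contribution equals $b_{n+1}$, while the second term inside the braces times the same prefactor is manifestly $b_n$. Therefore the $n$-th summand of $(9)$ is $b_{n+1}-b_n$. This exponent bookkeeping is the only place where care is needed, and I expect it to be the main (though entirely routine) obstacle.

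Next I would run the telescoping directly on partial sums: $\sum_{n=0}^{M}(b_{n+1}-b_n)=b_{M+1}-b_0$, and $b_0=\dfrac{(\delta!)^{\delta-1}}{q+(\delta!)^{N\delta}}$. It remains to check $b_{M+1}\to 0$ in the relevant norm. Since $\epsilon^{M+1}x^{\delta}b_{M+1}$ is the $(M+1)$-th term of the series $(3)$, which by Theorem $(4.8)$ converges for every non-zero $x\in\mathbb{R}$ and every non-zero $x\in\mathbb{Q}_p$, these terms tend to $0$; dividing by the fixed non-zero factor $x^{\delta}$ (in $\mathbb{R}$, respectively in $\mathbb{Q}_p$) gives $b_{M+1}\to 0$ in that norm. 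Letting $M\to\infty$ then yields $\sum_{n=0}^{\infty}(b_{n+1}-b_n)=-b_0=-\dfrac{(\delta!)^{\delta-1}}{q+(\delta!)^{N\delta}}$, which is precisely the right-hand side of $(9)$, valid for all non-zero $x\in\mathbb{R}$ and all non-zero $x\in\mathbb{Q}_p$ for every prime $p$. This completes the proposed proof.
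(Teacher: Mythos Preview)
Your argument is correct and rests on the same telescoping identity as the paper: writing $b_n=\dfrac{((\gamma n+\delta)!)^{\gamma n+\delta-1}x^{\gamma n}}{q+((\gamma n+\delta)!)^{N(\gamma n+\delta)}}$, the $n$-th summand of $(9)$ is exactly $b_{n+1}-b_n$, and $b_M\to 0$ because $\epsilon^{M}x^{\delta}b_M$ is the general term of the convergent series $(3)$. The paper reaches the same cancellation slightly less directly: it specialises $(3)$ to $\epsilon=-1$, writes out $\phi^{-1,q}_{\gamma,\delta}$ and $-\phi^{-1,q}_{\gamma,\delta}$ with the terms regrouped in pairs (their equations $(12)$ and $(13)$, using the factorial identity $(11)$ which is your $(\gamma(n+1)+\delta)!=(\gamma n+\delta)!\,(\gamma n+\delta+1)_{\gamma}$), and then adds the two expressions and divides by $x^{\delta}$. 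Your route is a cleaner packaging of the same computation and has the minor advantage of making the role of the hypothesis $x\neq 0$ and of Theorem $(4.8)$ (for the vanishing tail $b_{M+1}\to 0$) completely explicit.
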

\begin{proof}
	From $(3)$, we have 
	\begin{align*}
		\phi_{\gamma, \delta}^{\epsilon, q}=\sum_{n=0}^{\infty} \epsilon^n \frac{((\gamma n+\delta)!)^{\gamma n+\delta}}{q+((\gamma n+\delta)!)^{N(\gamma n+\delta})} \frac{x^{\gamma n+\delta}}{(\gamma n+\delta)!}, \ \epsilon=\pm 1.
	\end{align*}
	For $\epsilon=-1$, 
	\begin{equation}
	\begin{aligned}
	\phi_{\gamma, \delta}^{-1, q} &=\sum_{n=0}^{\infty} (-1)^n \frac{((\gamma n+\delta)!)^{\gamma n+\delta}}{q+((\gamma n+\delta)!)^{N(\gamma n+\delta})} \frac{x^{\gamma n+\delta}}{(\gamma n+\delta)!} \\
	&= \frac{(\delta !)^{\delta}}{q+(\delta !)^{N \delta}} \frac{x^{\delta}}{\delta !}-\frac{((\gamma+\delta)!)^{\gamma+\delta}}{q+((\gamma+\delta)!)^{N(\gamma+\delta)}}\frac{x^{\gamma+\delta}}{(\gamma+\delta)!}+\frac{((2\gamma+\delta)!)^{2\gamma+\delta}}{q+((2\gamma+\delta)!)^{N(2\gamma+\delta)}}\frac{x^{2\gamma+\delta}}{(2\gamma+\delta)!}-\cdots 
	\end{aligned}
	\end{equation}
	Note that, 
	\begin{equation}
	\begin{aligned}
	& ((2 \gamma+\delta)!)^{\gamma}(\gamma+\delta+1)_{\gamma}^{\gamma+\delta-1}((\gamma +\delta)!)^{\gamma+\delta-1} \\ &=((\gamma+\delta)!)^{\gamma+\delta-1}((2 \gamma+\delta)!)^{\gamma}[(\gamma+\delta+1)(\gamma+\delta+2) \cdots (\gamma+\delta+\gamma)]^{\gamma+\delta-1} \\ &= ((2 \gamma+\delta)!)^{\gamma}[((\gamma+\delta)!)(\gamma+\delta+1)(\gamma+\delta+2) \cdots (\gamma+\delta+\gamma)]^{\gamma+\delta-1} \\ &=((2 \gamma+\delta)!)^{\gamma} [(\gamma+\delta+\gamma)!]^{\gamma+\delta-1} \\ &=((2 \gamma+\delta)!)^{\gamma} [(2 \gamma+\delta)!]^{\gamma+\delta-1} \\ &=((2 \gamma+\delta)!)^{2 \gamma+\delta-1}
	\end{aligned}
	\end{equation}
	Using equation $(11)$, the equation $(10)$ can be rewritten as 
	\begin{equation}
	\begin{aligned}
	& \phi_{\gamma, \delta}^{-1, q}=\frac{(\delta !)^{\delta-1}}{q+(\delta !)^{N \delta}} x^{\delta}+((\gamma+\delta)!)^{\gamma+\delta-1} x^{\gamma+\delta} \left[\frac{((2 \gamma+\delta)!)^{\gamma} (\gamma +\delta+1)_{\gamma}^{\gamma+\delta-1}}{q+((2 \gamma+\delta)!)^{N(2 \gamma +\delta)}}x^{\gamma}-\frac{1}{q+((\gamma+\delta)!)^{N(\gamma+\delta)}} \right]+\cdots 
	\end{aligned}
	\end{equation}
	\begin{equation}
	\begin{aligned}
	& -\phi_{\gamma, \delta}^{-1, q}=(\delta !)^{\delta-1} x^{\delta} \left[\frac{((\gamma +\delta)!)^{\gamma} (\delta+1)_{\gamma}^{\delta-1}}{q+((\gamma+\delta)!)^{N(\gamma+\delta)}}x^{\gamma}-\frac{1}{q+(\delta !)^{N \delta}}  \right] \\ & ((2 \gamma+\delta)!)^{2 \gamma+\delta-1} x^{2 \gamma+\delta} \left[\frac{((3 \gamma+\delta)!) (2 \gamma+\delta+1)_{\gamma}^{2 \gamma+\delta-1}}{q+((3 \gamma+\delta)!)^{N(3 \gamma+\delta)}}x^{\gamma}-\frac{1}{q+((2 \gamma+\delta)!)^{N(2 \gamma+\delta)}} \right]+\cdots
	\end{aligned}
	\end{equation}
	Adding $(12)$ and $(13)$ following a division by $x^{\delta}, \ (x \neq 0)$ results the formula
	\begin{equation}
	\begin{aligned}
	& \sum_{n=0}^{\infty} ((\gamma n+\delta)!)^{\gamma n+\delta-1} x^{\gamma n} \left\{\frac{[(\gamma (n+1)+\delta)!]^{\gamma} (\gamma n+\delta+1)_{\gamma}^{\gamma n+\delta-1}}{q+[(\gamma (n+1)+\delta)!]^{N(\gamma(n+1)+\delta)}}x^{\gamma} -\frac{1}{q+((\gamma n+\delta)!)^{N(\gamma n+\delta)}} \right\} \\ &=-\frac{(\delta !)^{\delta-1}}{q+(\delta !)^{N \delta}}
	\end{aligned}
	\end{equation}
	This proves the  summation formula, which gives rational sum and has a place for all $ x (\neq 0) \in \mathbb{R}$ as well as for all $x (\neq 0) \in \mathbb{Q}_p$.
	\end{proof}

\subsection{Adelic Aspect}
\begin{defn} \cite{schi-1} An adele is an infinite sequence 
	\begin{equation}
	a=(a_{\infty}, a_2, \cdots, a_p, \cdots ),
	\end{equation}
	in which $a_{\infty}$ is a real number, and $a_p, \ p \in \{2,3,5, \cdots\}$ a p-adic number and all but finite number of $a_p \in \mathbb{Z}_p$. The collection $\mathfrak{A}$ of all such sequences or adeles forms a ring under componentwise addition and multiplication. This is called \textit{ring of adeles} and its additive group is called the \textit{group of adeles.} \\ The elements of the ring of adeles $\mathfrak{A}$ that have an inverse are called \textit{ideles}. The set $\mathfrak{A}^{*}$ of all ideles forms a group under multiplication. It is called the group of \textit{ideles.}\\
	Thus the elements of the group of ideles are sequences 
	$$ \lambda=(\lambda_{\infty}, \lambda_2, \cdots, \lambda_p, \cdots),$$
	where $\lambda_p \neq 0$ and $|\lambda_p|_p=1$ for all $p$ with a finite number of exceptions. \\ Therefore an idele is an adele but the converse is not true i.e., 
	\begin{equation}
	\mathfrak{A} \subsetneq \mathfrak{A}^{*}.
	\end{equation}
	There is a topology in the group of adeles defined in the following way. Consider the subgroup $\mathfrak{A}^{\circ}$ of the adeles 
	\begin{equation}
	a=(a_{\infty}, a_2, \cdots, a_p, \cdots),
	\end{equation} where all $a_p \in \mathbb{Z}_p$. In $\mathfrak{A}^{\circ}$ we introduce the topology of the Tichonoff product of the topological spaces $\mathbb{R}, \ O_2, \cdots, O_p, \cdots,$ where $O_p$ is the subgroup of p-adic integers. This subgroup $\mathfrak{A}^{\circ}$ is declared to be an open set in $\mathfrak{A}$. \\
	Thus, a sequence of adeles $a^{(n)}=(a_{\infty}^{(n)}, a_{2}^{(n)}, \cdots, a_{p}^{(n)}, \cdots)$ is said to converge to the adele $a=(a_{\infty}, a_2, \cdots, a_p, \cdots)$ if it converges to $a$ componentwise and if there is an $m \in \mathbb{N}$ such that for $n \geq m$ the numbers $a_p-a_p^{(n)}$ are p-adic integers.
\end{defn}
We note down the following Lemma which will be used in Theorem (4.13).
\begin{lem}
	If for $a,b \in \mathbb{Q}_p$ we have $|a|_p \neq |b|_p$, then $|a+b|_p=\max \{|a|_p, |b|_p\}.$
\end{lem}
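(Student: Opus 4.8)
The plan is to exploit the strong triangle inequality (property (iii) of $|\cdot|_p$) in both directions, together with a symmetry reduction. First I would assume without loss of generality that $|a|_p > |b|_p$; the other case is handled by interchanging the roles of $a$ and $b$, and the hypothesis $|a|_p \neq |b|_p$ guarantees one of the two strict inequalities holds, so $\max\{|a|_p,|b|_p\} = |a|_p$.

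The easy direction is immediate: by the ultrametric inequality,
\[
|a+b|_p \le \max\{|a|_p,|b|_p\} = |a|_p .
\]
For the reverse inequality, the trick is to write $a$ as a difference involving $a+b$, namely $a = (a+b) + (-b)$. Applying the strong triangle inequality again, and using that $|-b|_p = |b|_p$, gives
\[
|a|_p \le \max\{|a+b|_p,\ |b|_p\}.
\]
Now I would argue that the maximum on the right cannot equal $|b|_p$: if it did, we would get $|a|_p \le |b|_p$, contradicting the assumption $|a|_p > |b|_p$. Hence the maximum equals $|a+b|_p$, so $|a|_p \le |a+b|_p$.

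Combining the two inequalities yields $|a+b|_p = |a|_p = \max\{|a|_p,|b|_p\}$, which is the claim. I do not expect any serious obstacle here — the only point requiring a little care is the logical step that rules out the case $\max\{|a+b|_p,|b|_p\} = |b|_p$, which is precisely where the strict inequality $|a|_p \neq |b|_p$ is used; without it the conclusion genuinely fails (e.g. $a = 1$, $b = -1$ in $\mathbb{Q}_p$).
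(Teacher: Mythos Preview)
Your proof is correct and essentially identical to the paper's own argument: both assume without loss of generality $|a|_p > |b|_p$, apply the ultrametric inequality to $a = (a+b) - b$ to get $|a|_p \le \max\{|a+b|_p, |b|_p\}$, rule out $|b|_p$ as the maximum by contradiction, and sandwich with the direct inequality $|a+b|_p \le |a|_p$. The only difference is presentational --- you state the easy direction first, the paper states it second.
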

\begin{proof}
	Given $|a|_p \neq |b|_p$, i.e., either $|a|_p>|b|_p$ or $|a|_p<|b|_p$. \\ Now let us consider the case when $|a|_p>|b|_p$, then 
	\begin{align*}
		|a|_P=|(a+b)-b|_p & \leq \max \left(|a+b|_p, |b|_p\right) \\ &=|a+b|_P,
	\end{align*} for otherwise, $|a|_p \leq |b|_p$, a contradiction to our assumption. Therefore, $$ |a|_p \leq |a+b|_p \leq \max \left(|a|_p, |b|_p\right)=|a|_p,$$ and hence $|a+b|_p=\max \left(|a|_p,|b|_p\right).$ 
\end{proof}
Now recall the power series $F(x)=\sum_{n=1}^{\infty} \binom{a}{n}x^n$, which is convergent with respect to the p-adic norm and real norm which sums a rational number under certain conditions (see Theo. ($4.7$)). 
\begin{thm}
	Let us have the following sequence 
	\begin{equation}
	\mathcal{F}(x)=\left(F(x_{\infty}), F(x_{2}), \cdots, F(x_p), \cdots \right).
	\end{equation}
	If $x=(x_{\infty}, x_2, \cdots, x_p, \cdots)$ is an adele with $|x_{v}|_p<1, \ v \in \{\infty,2,3, \cdots, p, \cdots \}$ and $ a \in \mathbb{Z}_p$ i.e, $|a|_p \leq 1$, then  $\mathcal{F}(x)$ is an adele. 	
\end{thm}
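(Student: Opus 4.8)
The plan is to verify, coordinate by coordinate, the defining requirements of an adele (Definition (4.10)) for $\mathcal{F}(x) = (F(x_\infty), F(x_2), \dots, F(x_p), \dots)$: that $F(x_\infty) \in \mathbb{R}$, that $F(x_v) \in \mathbb{Q}_v$ for every prime $v$, and that $F(x_v) \in \mathbb{Z}_v$ for all but finitely many $v$. I read the hypothesis ``$|x_v|_p < 1$'' in the only consistent way, as $|x_v|_v < 1$ at each place $v \in \{\infty, 2, 3, \dots\}$, and I take $a$ to be a $v$-adic integer at every place (for instance $a \in \mathbb{Z}$), so that Lemma (4.2) applies simultaneously at all $v$ while $F(x_\infty)$ is meanwhile a series with real coefficients.

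First I would treat the archimedean coordinate. The ratio computation in the proof of Theorem (4.3)(i) shows that $\sum_{n \geq 1}\binom{a}{n}x^n$ has radius of convergence $1$ with respect to $|.|$, so $|x_\infty| < 1$ forces $F(x_\infty)$ to converge in $\mathbb{R}$; hence $F(x_\infty)$ is a bona fide real number. Next, fix a prime $v$. Since $a \in \mathbb{Z}_v$, Lemma (4.2) gives $\binom{a}{n} \in \mathbb{Z}_v$ for all $n$, i.e.\ $B(a,v,X) \in \mathbb{Z}_v[[X]]$, and then Lemma (4.1) (equivalently Theorem (4.3)) makes the $v$-adic radius of convergence equal to $1$; therefore $|x_v|_v < 1$ makes $F(x_v)$ converge in $\mathbb{Q}_v$, so each prime coordinate of $\mathcal{F}(x)$ is a genuine $v$-adic number.

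It remains to check integrality at the primes, and here the bound obtained is stronger than what is required. For a prime $v$ and $n \geq 1$, $\binom{a}{n} \in \mathbb{Z}_v$ by Lemma (4.2) while $|x_v|_v < 1$ means $x_v \in v\mathbb{Z}_v \subset \mathbb{Z}_v$, so $\binom{a}{n} x_v^n \in \mathbb{Z}_v$ and every partial sum of $F(x_v)$ lies in $\mathbb{Z}_v$. As $\mathbb{Z}_v$ is complete for $|.|_v$, hence closed in $\mathbb{Q}_v$, the limit satisfies $F(x_v) \in \mathbb{Z}_v$ for \emph{every} prime $v$, a fortiori for all but finitely many. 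Putting the three points together, $\mathcal{F}(x)$ meets each clause of Definition (4.10) and is therefore an adele; in fact it sits inside the integral subgroup $\mathfrak{A}^\circ$.

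The step I would be most careful about --- and the only one where a hypothesis enters in an essential place-by-place way --- is the uniform $v$-adic integrality of the coefficients $\binom{a}{n}$, which is exactly Lemma (4.2) once $a$ is a $v$-adic integer at every $v$; note that the bare hypothesis $a \in \mathbb{Z}_p$ at a single prime would not, by itself, deliver convergence of $F(x_v)$ in $\mathbb{Q}_v$ from $|x_v|_v < 1$ at the remaining primes, since there the radius of convergence computed in Theorem (4.3) is $v^{-1/(v-1)}/|a|_v$. Everything else --- convergence in $\mathbb{R}$, convergence in each $\mathbb{Q}_v$, closedness of $\mathbb{Z}_v$ --- is routine, so I do not anticipate a genuine obstacle beyond keeping the reading of ``$|x_v|_p < 1$'' and of ``$a \in \mathbb{Z}_p$'' uniform over all places.
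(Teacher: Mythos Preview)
Your argument is correct and follows the same overall strategy as the paper: use Lemma~(4.2) to get $\binom{a}{n}\in\mathbb{Z}_v$, conclude that each $v$-adic component $F(x_v)$ lies in $\mathbb{Z}_v$, and read off the adele condition. The execution differs in two minor but instructive ways. First, the paper runs an explicit induction on the partial sums $F_n(x_v)=\sum_{i=1}^n\binom{a}{i}x_v^i$, factoring out $x_v$ and estimating $\bigl|F_n(x_v)\bigr|_v\le |x_v|_v\cdot 1<1$ at each stage, then passing to the limit; you instead observe once that every term lies in $\mathbb{Z}_v$ and appeal to closedness of $\mathbb{Z}_v$ in $\mathbb{Q}_v$, which is shorter and avoids the somewhat loose equalities in the paper's chain. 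Second, you treat the archimedean coordinate separately and correctly as a real convergence question, whereas the paper writes $|F_n(x_\infty)|_p$ and carries the same $p$-adic estimate through for $x_\infty$, blurring the distinction between places. Your remark that the hypothesis ``$a\in\mathbb{Z}_p$'' must really be read uniformly over all primes (e.g.\ $a\in\mathbb{Z}$) for Lemma~(4.2) and Theorem~(4.3) to apply at every $v$ is a genuine clarification the paper leaves implicit.
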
 
\begin{proof}
	We have 
	\begin{equation}
	F(x)=\sum_{n=1}^{\infty} \binom{a}{n}x^n.
	\end{equation}
	Let us define $F_n(x)=\sum_{i=1}^{n} \binom{a}{i}x^i$, then $F(x)=\lim_{n \to \infty} F_n(x)$. \\
	Since $x=(x_{\infty}, x_2, \cdots, x_p, \cdots)$ is an adele, $|x_v|_p<1, \ v \in \{\infty,2,3, \cdots, p, \cdots \}$ for all $v$ with a finite number of exceptions. However we assumed that $|x_v|_p<1$ for all $v \in \{\infty, 2,3, \cdots, p, \cdots \}$. So the power series $(19)$ converges. \\
	The Lemma (4.2) says if $a \in \mathbb{Z}_p$, then $ \binom{a}{n} \in \mathbb{Z}_p$ and hence $ \left| \binom{a}{n}\right|_p \leq 1$. \\  From $(19)$ and using Lemma (4.2), we have
	
	\begin{align*}
		|F_1(x_{\infty})|_p=|a x_{\infty}|_p=|a|_p|x_{\infty}|_p<1,
	\end{align*}, 
	\begin{align*}
		|F_2(x_{\infty})|_p=|ax_{\infty}+\binom{a}{2} x_{\infty}^2|_p=|x_{\infty}|_p \left|a+\binom{a}{2} x_{\infty}\right|_p=|x_{\infty}|_p|a|_p<1,
	\end{align*}
	\begin{eqnarray*}
		............& ............ 
	\end{eqnarray*}
	\begin{align*}
		|F_n(x_{\infty})|=& \left|ax_{\infty}+\binom{a}{2}x_{\infty}^2+\cdots+\binom{a}{n} x_{\infty}^n\right|_p \\ =&|x_{\infty}|_p \ \left|a+\binom{a}{2}x_{\infty}+\cdots+\binom{a}{n} x_{\infty}^{n-1} \right|_p \\ =&|x_{\infty}|_p|a|_p<1 , 
	\end{align*}
	Continuing in this one can show that 
	\begin{equation}
	|F(x_{\infty})|_p=\lim_{n \to \infty}|F_n(x_{\infty})|_p<1.
	\end{equation}
	Next for $p \in \{2,3, \cdots, p, \cdots \},$ \\
	\begin{align*}
		|F_1(x_{p})|_p=|a x_{p}|_p=|a|_p|x_{p}|_p<1,
	\end{align*}, 
	\begin{align*}
		|F_2(x_{p})|_p=|ax_{p}+\binom{a}{2} x_{p}^2|_p=|x_{p}|_p \left|\binom{a}{2} x_{p}\right|=|x_{p}|_p|a|_p<1,
	\end{align*}
	\begin{eqnarray*}
		............& ............ 
	\end{eqnarray*}
	\begin{align*}
		|F_n(x_{p})|=& \left|ax_{p}+\binom{a}{2}x_{p}^2+\cdots+\binom{a}{n} x_{p}^n\right|_p \\ =&|x_{p}|_p \ \left|a+\binom{a}{2}x_{p}+\cdots+\binom{a}{n} x_{p}^{n-1} \right|_p \\ =&|x_{p}|_p|a|_p<1 , 
	\end{align*}
	Continuing in this one can show that 
	\begin{equation}
	|F(x_{p})|_p=\lim_{n \to \infty}|F_n(x_{p})|_p<1.
	\end{equation}
	Thus $\mathcal{F}(x)$ is an adele whenever $x=(x_{\infty}, x_2, \cdots, x_p, \cdots)$ is an adele with all $|x|_p<1, \ p \in \{\infty,2,3, \cdots,p, \cdots \}$. 
	
\end{proof}
\begin{thm}
	The sequence $\mathcal{G}(x)=(F(a,b;c;x_{\infty}), F(a,b;c; x_2), \cdots, F(a,b;c;x_p), \cdots )$ is an idele and hence an adele if $x=(x_{\infty}, x_2, \cdots, x_p , \cdots)$ is an adele with $|x_v|_p<1$ for all $v \in \{\infty, 2,3, \cdots, p, \cdots \}$ and $a \in \mathbb{Z}_p^{\times}$, $b=c$, $p \in \{2,3,5, \cdots \}$, where 
	\begin{align}
		F(a,b;c;x)=\sum_{n=0}^{\infty} \frac{a(a+1) \cdots (a+n) b(b+1) \cdots (b+n)}{c(c+1) \cdots (c+n) n!} x^n=\sum_{n=0}^{\infty} \frac{(a)_n (b)_n}{(c)_n n!}x^n
	\end{align} is the hypergeometric series and $(a)_n=a(a+1) \cdots (a+n-1)$ is the Pochhammer symbol.
	
\end{thm}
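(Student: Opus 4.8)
The decisive observation is that the hypothesis $b=c$ collapses the hypergeometric series to a binomial type series: since $(b)_n=(c)_n$, the quotient $(b)_n/(c)_n$ cancels and
$$F(a,b;b;x)=\sum_{n=0}^{\infty}\frac{(a)_n}{n!}\,x^n .$$
Now I would use the purely algebraic identity
$\dfrac{(a)_n}{n!}=\dfrac{a(a+1)\cdots(a+n-1)}{n!}=(-1)^n\dbinom{-a}{n}$, valid in any commutative ring, to rewrite this as $F(a,b;b;x)=\sum_{n\ge 0}\binom{-a}{n}(-x)^n=B(-a,p,-x)$ in the notation of the paper, i.e. formally $(1-x)^{-a}$. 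So the first step of the proof is to record this identification, invoking Lemma (4.5) (supplemented by the density/continuity argument used in the proof of Lemma (4.2) when $a$ is not rational) so that the formal identity $B(-a,p,X)=(1+X)^{-a}$ transfers to an honest identity of convergent series in each $\mathbb{Q}_v$.

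Next I would check that $\mathcal{G}(x)$ is a genuine sequence of numbers. Since $a\in\mathbb{Z}_p^{\times}\subseteq\mathbb{Z}_p$, we have $-a\in\mathbb{Z}_p$, so Lemma (4.2) gives $\binom{-a}{n}\in\mathbb{Z}_p$ for every $n$, hence $B(-a,p,X)\in\mathbb{Z}_p[[X]]$; by Lemma (4.1) (equivalently Theorem (4.3)) the series $F(a,b;b;x_p)$ converges at every finite place because $|{-x_p}|_p=|x_p|_p<1$, and the binomial radius-of-convergence computation shows $F(a,b;b;x_{\infty})$ converges since $|x_{\infty}|<1$. Thus each component of $\mathcal{G}(x)$ is a well-defined element of $\mathbb{R},\mathbb{Q}_2,\mathbb{Q}_3,\dots$ respectively.

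The heart of the matter is showing each finite component is a unit and the archimedean one is nonzero. Writing $F(a,b;b;x_p)=1+\sum_{n\ge 1}\binom{-a}{n}(-x_p)^n$, for every $n\ge 1$ we have $\bigl|\binom{-a}{n}(-x_p)^n\bigr|_p\le |x_p|_p^{\,n}\le |x_p|_p<1$, using $\bigl|\binom{-a}{n}\bigr|_p\le 1$ from Lemma (4.2); by the ultrametric inequality this forces $\bigl|F(a,b;b;x_p)-1\bigr|_p<1$. Since $|1|_p=1\neq\bigl|F(a,b;b;x_p)-1\bigr|_p$, Lemma (4.10) yields $|F(a,b;b;x_p)|_p=1$, and in particular $F(a,b;b;x_p)\neq 0$. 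At the archimedean place $F(a,b;b;x_{\infty})=(1-x_{\infty})^{-a}$ with $1-x_{\infty}>0$ (as $|x_{\infty}|<1$), which is a nonzero real number. Hence every component of $\mathcal{G}(x)$ is nonzero and $|F(a,b;b;x_p)|_p=1$ for all $p$ — with, a fortiori, only finitely many (in fact no) exceptions — so by the description of ideles in Definition (4.11) the sequence $\mathcal{G}(x)$ is an idele; since every idele is an adele, this completes the proof.

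I expect the only real obstacle to be the bookkeeping around the meaning of ``$a\in\mathbb{Z}_p^{\times}$'' simultaneously at all places, together with the legitimacy of transporting the formal identity $F(a,b;b;X)=(1-X)^{-a}$ from $\mathbb{Q}[[X]]$ to $\Omega_p[[X]]$ and to $\mathbb{R}$; once that is pinned down (for instance by restricting to $a\in\mathbb{Z}$, which is automatically a $p$-adic unit for $p\nmid a$, or by running the continuity argument of Lemma (4.2) once), the remaining estimate is exactly the ultrametric computation already carried out in Theorem (4.11).
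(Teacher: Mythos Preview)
Your proof is correct and follows essentially the same route as the paper: collapse the hypergeometric series via $b=c$ to a binomial-type series, then show each finite-place value is a $p$-adic unit (so the sequence is an idele). The two proofs differ only in tactics at the finite places. The paper writes the closed form and applies Lemma~(4.10) directly to $1+x_p$ to get $|F(a,b;b;x_p)|_p=|1+x_p|_p^{a}=1$; you instead bound the tail $\sum_{n\ge1}\binom{-a}{n}(-x_p)^n$ termwise using Lemma~(4.2) and then apply Lemma~(4.10) to $1+(\text{tail})$. Your route is a bit more careful, since it avoids writing $|{\,\cdot\,}|_p^{a}$ with a $p$-adic exponent $a$, and your identification $F(a,b;b;x)=(1-x)^{-a}$ is actually the correct closed form (the paper records it as $(1+x)^a$, a harmless sign slip for the argument since $|1\pm x_p|_p=1$ either way). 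Your closing caveat about the meaning of ``$a\in\mathbb{Z}_p^{\times}$ for all $p$'' is well taken; the paper does not address it either.
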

\begin{proof} 	Since $|x_v|_p<1, \ v \in \{\infty, 2,3, \cdots, p, \cdots \}$, the power series
	$$F(a,b;c;x)=\sum_{n=0}^{\infty} \frac{(a)_n (b)_n}{(c)_n n!}x^n.$$ 
  converges.\\ 
	Putting $b=c$ in the above equation, we get
	$$F(a,b;b;x)=\sum_{n=0}^{\infty} \frac{(a)_n}{n!}x^n =(1+x)^a.$$
	Given that $x=(x_{\infty}, x_2, \cdots, x_p, \cdots)$ is an adele and $|x_v|_p<1$ for all $x_v$. Now using Lemma (4.10), $$ |F(a,b;b;x_{\infty})|_p=|(1+x_{\infty})^a|_p=|1+x_{\infty}|_p^a=\max \left(1, |x_{\infty}|_p^a\right)=1,$$
	$$|F(a,b;b;x_p)|_p=|(1+x_p)^a|_p=|1+x_p|_p^a=\max \left(1, |x_p|_p^a\right)=1, \ \forall p \in \{2,3,5, \cdots\}.$$
	Hence $(F(a,b;c;x_{\infty}), F(a,b;c; x_2), \cdots, F(a,b;c;x_p), \cdots )$ is an idele and by equation $(16)$, it is also adele.
\end{proof}
At the end we conclude the section by giving a general theorem relating p-adically convergent power series and adele. 
\begin{thm}
	If $f(x) \in \mathbb{Z}_p[[x]]$, then the sequence $(f(x_{\infty}), f(x_2), \cdots, f(x_p), \cdots)$ is an adele if $x=(x_{\infty}, x_2, \cdots, x_p, \cdots)$ is an adele.  
\end{thm}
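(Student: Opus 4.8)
The plan is to regard this as the common generalisation of the preceding adelic theorems (Theorems (4.11) and (4.12)) and to reuse their mechanism. Write $f(x)=\sum_{n\ge 0}c_nx^n$ and $f_n(x)=\sum_{i=0}^{n}c_ix^i$, so that $f(y)=\lim_{n\to\infty}f_n(y)$ at every $y$ where the series converges, and put $\mathcal{F}(x)=(f(x_{\infty}),f(x_2),\dots,f(x_q),\dots)$. Since the coordinates $f(x_{\infty})$ and $f(x_q)$ with $q\neq p$ have to make sense as a real, resp.\ $q$-adic, number, I would first record --- as is implicit already in Theorems (4.11) and (4.12) --- that the $c_n$ are taken to be rational numbers lying in $\mathbb{Z}_q$ for \emph{every} prime $q$, equivalently integers, so that $f\in\mathbb{Z}_q[[x]]$ for all $q$ and $f$ also defines a genuine real power series. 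Then I would check that $\mathcal{F}(x)$ is a well-formed sequence of the shape required by the definition of an adele: because $x$ is an adele and (as in those theorems) $|x_v|_v<1$ at every place $v$, the real series $\sum_{n\ge 0} c_nx_{\infty}^n$ sits strictly inside its real radius of convergence, so $f(x_{\infty})\in\mathbb{R}$, and for each prime $q$ the hypothesis $f\in\mathbb{Z}_q[[x]]$ together with Lemma (4.1) gives convergence of $f$ on $D(1^{-})$, so $f(x_q)\in\mathbb{Q}_q$.

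The crux is the finiteness clause in the definition of an adele, namely that $f(x_q)\in\mathbb{Z}_q$ for all but finitely many $q$. Since $x$ is an adele, $x_q\in\mathbb{Z}_q$ for all but finitely many primes $q$; fix such a $q$. Each partial sum $f_n(x_q)=\sum_{i=0}^{n}c_ix_q^i$ is a finite combination, with coefficients $c_i\in\mathbb{Z}_q$, of powers of $x_q\in\mathbb{Z}_q$, so $f_n(x_q)\in\mathbb{Z}_q$ because $\mathbb{Z}_q$ is a ring; and since $\mathbb{Z}_q$ is the closed unit ball of the complete field $\mathbb{Q}_q$, it is closed in the $q$-adic topology, whence $f(x_q)=\lim_{n\to\infty}f_n(x_q)\in\mathbb{Z}_q$. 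This is exactly the coordinatewise computation carried out in the proof of Theorem (4.11), with the binomial coefficient $\binom{a}{n}$ replaced by the general coefficient $c_n$ and using $|c_nx_q^n|_q\le|x_q|_q^n$ together with the strong triangle inequality. Combining this with the previous paragraph, $\mathcal{F}(x)$ satisfies every requirement of the definition of an adele, which is the assertion.

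I expect the main obstacle to lie in the hypotheses rather than in any depth of argument. Lemma (4.1) delivers convergence only on the \emph{open} unit disc, whereas a general adele may have $q$-adic unit coordinates $|x_q|_q=1$ at infinitely many primes (and coordinates with $|x_q|_q>1$ at finitely many), at which $f(x_q)$ need not converge for an arbitrary $f\in\mathbb{Z}_p[[x]]$. The clean remedy --- consistent with what is assumed in Theorems (4.11) and (4.12) --- is to require $|x_v|_v<1$ at all places $v$, which still leaves $x$ an adele (indeed an element of $\mathfrak{A}^{\circ}$), or else to restrict to $f$ with $|c_n|_q\to 0$ for every $q$, for which $\sum_{n\ge 0} c_nx_q^n$ converges on the whole of $\mathbb{Z}_q$; under either convention every step above goes through verbatim. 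One should likewise make the real-place convergence explicit, which is immediate once $|x_{\infty}|_{\infty}<1$ falls inside the real radius of convergence of the integer-coefficient series $\sum_{n\ge 0} c_nx^n$.
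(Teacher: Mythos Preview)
Your approach is essentially the paper's own: invoke Lemma~(4.1) to get convergence on $D(1^{-})$ and then argue that $f(x_q)\in\mathbb{Z}_q$ for all but finitely many $q$. Your treatment is in fact more careful than the paper's three-line proof---your partial-sum-plus-closedness argument makes explicit what the paper asserts without justification---and the hypothesis gaps you flag (no convergence guaranteed when $|x_q|_q=1$, the real place, and the implicit need for the coefficients to lie in $\mathbb{Z}_q$ for \emph{every} $q$ rather than a single fixed $p$) are genuine issues that the paper's own proof simply glosses over.
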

\begin{proof}
Since $x=(x_{\infty}, x_2, \cdots, x_p, \cdots)$ is an adele, $|x_p|_p \leq 1$ for all $p$ with a finite number of exceptions. \\
If $f(x) \in \mathbb{Z}_p[[x]]$, then Lemma $((4.1))$ shows that $f(x)$ converges in $D(1^{-})=\{x \in \mathbb{Q}_p: |x|_p <1 \}$. Therefore,
$$ |f(x_p)|_p \leq 1, \ \forall p \in \{2,3,5, \cdots, p, \cdots\}.$$
Hence the sequence $(f(x_{\infty}), f(x_2), \cdots, f(x_p), \cdots)$ is an adele. 
\end{proof}

\section{Conclusion} We have investigated rationality of power series both in real norm as well in p-adic norm. Then we have studied the adelic aspect and some results with the help of rational summable power series.   \\ \\
\textbf{Acknowledgement:} The authors are grateful to Professor Neal Koblitz, University of Washington, United States. The authors are grateful to Professor Branko G. Dragovich, Institute of Physics, Belgrade, Yugoslavia, for his continuous help and support throughout the preparation of this paper.
The second author is grateful to The Council Of Scientific and Industrial Research (CSIR), Government of India, for the award of JRF (Junior Research Fellowship).

\end{document}